\newtheorem{theorem}{Theorem}[section]
\newtheorem{proposition}{Proposition}[section]
\numberwithin{equation}{section}
\definecolor{lred}{rgb}{1,0.8,0.8}
\definecolor{lblue}{rgb}{0.8,0.8,1}
\definecolor{dred}{rgb}{0.6,0,0}
\definecolor{dblue}{rgb}{0,0,0.5}
\definecolor{dgreen}{rgb}{0,0.5,0.5}
\definecolor{blue}{rgb}{0,0,0.9}
\definecolor{red}{rgb}{0.9,0,0}
\definecolor{green}{rgb}{0,0.9,0}
\long\def\acks#1{\vskip 0.3in\noindent{\large\bf Acknowledgments}\vskip 0.2in
	\noindent #1}
\begin{document}
	
	\title{\bf Adaptive sieving with semismooth Newton proximal augmented Lagrangian algorithm for multi-task Lasso problems}

	\author{
	Lanyu Lin\textsuperscript{1}, 
	Yong-Jin Liu\textsuperscript{1,2}, 
	Bo Wang\textsuperscript{1}, 
	and Junfeng Yang\textsuperscript{3}
	}
	\renewcommand{\thefootnote}{\arabic{footnote}} 

	\footnotetext[1]{School of Mathematics and Statistics, Fuzhou University, Fuzhou 350108, Fujian, P. R. China (e-mail: lylin@fzu.edu.cn, bowang@fzu.edu.cn).}
	\footnotetext[2]{Corresponding author. Center for Applied Mathematics of Fujian Province, School of Mathematics and Statistics, Fuzhou University, Fuzhou 350108, Fujian, P. R. China (e-mail: yjliu@fzu.edu.cn).}
	\footnotetext[3]{Department of Mathematics, Nanjing University, Nanjing 210093, Jiangsu, P. R. China (e-mail: jfyang@nju.edu.cn).}

	\maketitle
	
	\noindent {\bf Abstract.} 
    Multi-task learning enhances model generalization by jointly learning from related tasks. This paper focuses on the $\ell_{1,\infty}$-norm constrained multi-task learning problem, which promotes a shared feature representation while inducing sparsity in task-specific parameters. We propose an adaptive sieving (AS) strategy to efficiently generate a solution path for multi-task Lasso problems. Each subproblem along the path is solved via an inexact semismooth Newton proximal augmented Lagrangian ({\sc Ssnpal}) algorithm, achieving an asymptotically superlinear convergence rate. By exploiting the Karush-Kuhn-Tucker (KKT) conditions and the inherent sparsity of multi-task Lasso solutions, the {\sc Ssnpal} algorithm solves a sequence of reduced subproblems with small dimensions. This approach enables our method to scale effectively to large problems. Numerical experiments on synthetic and real-world datasets demonstrate the superior efficiency and robustness of our algorithm compared to state-of-the-art solvers.
	\vspace{2mm}
	\\
	{\bf Keywords.} Multi-task Lasso problem, proximal augmented Lagrangian algorithm, semismooth Newton, adaptive sieving strategy, solution path
	\vspace{2mm}
	\\
	{\bf Mathematics Subject Classification (2000).} 90C06, 90C25, 90C90

	\section{Introduction} 
	For a given set of $n$ tasks, consider an $n$-task linear regression model:
	\begin{equation}\label{mtlproblem1}
		\hat{y}^i=X^i \hat{w}^i+\hat{\epsilon}^i, \ i=1,\ldots,n,
	\end{equation}
	where $X^i\in\mathbb{R}^{m_i\times d}$ is the observed data for the $i$-th task, $\hat{y}^i\in \mathbb{R}^{m_i}$ is the response variable, the vector $\hat{w}^i\in \mathbb{R}^d$ is the estimated coefficient, and $\hat{\epsilon}^i\in \mathbb{R}^{m_i}$ indexes the vector of random noises. Here, $m_i$ denotes the number of instances for the $i$-th task, and $d$ denotes the feature dimensionality. To share information across different tasks and attain the regression coefficients $\hat{w}^i, \ i=1,\ldots,n$, we introduce the multi-task Lasso problem in the following form:
	\begin{equation}\label{mtlproblem1}
		\min_{W=[\hat{w}^1,\hat{w}^2,\ldots,\hat{w}^n]} \Big\{   \frac{1}{2} \sum_{i=1}^{n}\|\hat{y}^i-X^i \hat{w}^i\|^2 \mid \|W\|_{1,\infty} \leq \gamma\Big\},
	\end{equation}
	where $\|W\|_{1,\infty}=\sum_{j=1}^{d}\max_{1\le i\le n}\lvert W_{ji}\rvert$, and $\gamma>0$ is a positive parameter. The multi-task learning was first proposed by Caruana \cite{Caruana1998Multitask} to improve generalization performance by leveraging useful information from multiple related learning tasks and has been widely used in many applications, including speech recognition \cite{Parameswaran2010Large}, handwritten digits recognition \cite{Quadrianto2010Multitask}, and disease progression prediction \cite{Zhou2011A}. To achieve group-wise sparsity across different tasks, recent studies have demonstrated that certain mixed norms, such as the $\ell_{1,2}$-norm \cite{Argyriou2008Convex,Obozinski2006Multi,Obozinskis2008High,Tropp2006Algorithms} and the $\ell_{1,\infty}$-norm \cite{Turlach2005Simultaneous,Quattoni2009An}, serve as effective regularizers or constraints in multi-task learning. In particular, the $\ell_{1,\infty}$-norm is often more advantageous than the $\ell_{1,2}$-norm for inducing sparse solutions. Consequently, this paper focuses on the multi-task learning problem with $\ell_{1,\infty}$-norm ball constraints, also known as the multi-task Lasso problem.

	The multi-task Lasso problem emerges from a variety of research areas, such as machine learning, statistics, and signal processing. In the literature, several methods have been proposed to tackle the $\ell_{1,\infty}$-norm ball constrained multi-task learning problem. Specifically, first-order methods include the double coordinate descent method \cite{Zhang2006A}, the projected gradient (PGM) algorithm \cite{Chu2020Semismooth}, and the projected subgradient algorithm \cite{Quattoni2009An}. Second-order algorithms such as the primal-dual infeasible interior-point algorithm  \cite{Wright1997Primal,Turlach2005Simultaneous} and the inexact semismooth Newton augmented Lagrangian ({\sc Ssnal}) algorithm \cite{Lin2024An} have been designed. Additionally, researchers have proposed some gradient (or subgradient) approaches to solve the $\ell_{1,\infty}$-norm regularized multi-task learning problem, which is equivalent to the multi-task Lasso problem (\ref{mtlproblem1}) follows from the Lagrangian duality. It is worthwhile to mention that there is a one-to-one correspondence between the regularizer involved in the $\ell_{1,\infty}$-norm regularized regression and $\gamma$ in Problem (\ref{mtlproblem1}), but it is difficult to characterize it by an analytic formula (cf. \cite{Osborne2000On}). In \cite{Liu2009Blockwise}, a scalable cyclical blockwise coordinate descent algorithm is developed to evaluate the entire regularization path of the $\ell_{1,\infty}$-norm regularized multi-task learning problem. However, there is a lack of convergence analysis for the algorithm in \cite{Liu2009Blockwise}. Duchi et al. \cite{Duchi2009Efficient} present the forward-backward splitting method (FOLOS) with the convergence rate $O(1/\sqrt{t})$ to solve the multi-task learning problem with the $\ell_{1,\infty}$-norm regularizer, where $t$ is the number of iterations. Based on a sorting procedure, Chen et al. \cite{Chen2009Accelerated} propose an accelerated gradient method (AGM) with the convergence rate $O(1/t^2)$ to solve the $\ell_{1,\infty}$-norm regularized problem and the numerical results demonstrate that the AGM algorithm outperforms the PGM algorithm \cite{Quattoni2009An} and the FOLOS algorithm \cite{Duchi2009Efficient}. Unfortunately, these gradient (or sub-gradient) methods are difficult to efficiently address large scale optimization problems. Therefore, this paper aims to design highly efficient and reliable algorithms for solving large-scale multi-task Lasso problems.

	To fully leverage the inherent sparsity of solutions to the multi-task Lasso problem and solve it more efficiently, we employ the adaptive sieving (AS) strategy first proposed in \cite{Lin2020Adaptive} to obtain solution paths of the multi-task Lasso problems. It is worth noting that the AS strategy is applicable to more general regularizers compared to other screening rules \cite{Tibshirani2012Strong,Ghaoui2010Safe,Wang2013Lasso}, which yields that the AS strategy is widely used for sparsity optimization problems, such as machine learning models with the exclusive lasso regularizer \cite{Lin2020Adaptive}, the estimation of high dimensional sparse precision matrices \cite{Li2023MARS}, the $\ell_1$-Regularized Logistic Regression \cite{Liu2023Dual}, among others. The central concept of the AS strategy is to reduce the number of variables based on the KKT conditions, thereby enhancing the efficiency of the algorithms involved. For a sequence of reduced subproblems, this paper develops an inexact semismooth Newton proximal augmented Lagrangian ({\sc Ssnpal}) algorithm. This idea is inspired by the outstanding performance of the {\sc Ssnal} algorithm in addressing multi-task Lasso problems \cite{Lin2024An} and other sparsity problems \cite{Li2018A,Li2020On,Lin2019Efficient,Fang2021Efficient}. The novelty of the {\sc Ssnpal} algorithm lies in its asymptotic superlinear convergence property, as well as the low computational costs in utilizing the semismooth Newton algorithm to solve its inner problems, resulting in the impressive numerical performance. To assess the efficiency and robustness of our proposed algorithm (referred to as AS-{\sc Ssnpal}), we conduct comprehensive numerical experiments to compare its numerical performance with that of {\sc Ssnpal}, ADMM \cite{Gabay1976A,Glowinski1975Sur}, and AS strategy with ADMM (referred to as AS-ADMM).

	The remaining parts of this paper are briefly summarized as follows. Section \ref{Projector} derives an explicit formula for constructing the generalized HS-Jacobian of the projector onto $\ell_{1,\infty}$-norm ball. In Section \ref{sec:AS}, we introduce the AS strategy to generate solution paths of multi-task Lasso problems and characterize its convergence properties. Section \ref{sec:Ssnpal} presents an inexact semismooth Newton proximal augmented Lagrangian algorithm for the reduced problems in the AS strategy. The {\sc Ssnpal} algorithm utilizes the semismooth Newton algorithm, known for its relatively low computational cost and fast superlinear or even quadratic convergence rates, to optimize its inner problems. We also establish global and asymptotically superlinear local convergence results for the {\sc Ssnpal} algorithm under mild conditions. In Section \ref{sec:experiments}, we conduct extensive numerical experiments on multi-task Lasso problems and demonstrate that our proposed algorithm is more effective and reliable than the {\sc Ssnpal} algorithm, the AS-ADMM algorithm, and the ADMM algorithm. Section \ref{sec:conclude} provides the concluding remarks of the paper.
	
	{\bf Notation.}
	Given positive integers $d$ and $n$, we denote $\mathbb{R}^n$ as the space of all $n$-dimensional vectors, $\mathbb{R}^{d\times n}$ as the space of all $d\times n$ real matrices, $I_n$ as the $n\times n$ identity matrix, and ``${\rm rem}(d,n)$" as the remainder of dividing $d$ by $n$. For any $x\in\mathbb{R}^n$, ${\rm sign}(x)$ refers to the sign vector with each component ${\rm sign}(x_i)$, $i=1,2,\ldots,n$, defined by ${\rm sign}(x_i)$=1 if $x_i>0$, $-1$ if $x_i<0$, and $0$ otherwise. ${\rm Diag}(x)$ is the diagonal matrix constructed with the vector $x$ as its diagonal elements. For a set $\mathcal{I}\subseteq \mathbb{R}^n$, we denote its cardinality by $\lvert\mathcal{I}\rvert$, and the distance of $x$ to $\mathcal{I}$ is given by ${\rm dist}(x,\mathcal{I}):={\rm inf}_{x'\in \mathcal{I}}\|x'-x\|$. Let $X\in \mathbb{R}^{d\times n}$ be a given matrix. $X^{\dagger}$ is the Moore-Penrose inverse of $X$. The vector ${\rm Vec}(X)$ is obtained by stacking up the columns of matrix $X$. Conversely, the operator ``Mat" serves as the inverse of ``Vec", meaning that $X={\rm Mat}({\rm Vec}(X))$.
	When $\mathcal{I}\subseteq \mathbb{R}^n$ is an index set, $X_{[d]{\mathcal{I}}}$ denotes the submatrix of $X$ formed by selecting the columns of $X$ indexed by $\mathcal{I}$, and a similar convention applies to $X_{{\mathcal{I}}[n]}$, where ``$[n]$" is often omitted for simplicity. The Fenchel conjugate of a proper convex function $f$ is denoted by $f^*$. The Moreau envelope and the proximal mapping of $f$ \cite{Moreau1965Proximite} are given by 
	\begin{align*}
		\mathcal{H}_{f}(x) &:= \min_{y\in \mathbb{R}^n} \{ \frac{1}{2}\|y-x\|^2+f(y)\},\\
		{\rm Prox}_{f}(x) &:= \underset{y\in \mathbb{R}^n}{\rm arg \min} \{ \frac{1}{2}\|y-x\|^2+f(y)\}. 
	\end{align*}

	\section{The metric projection onto the $\ell_{1,\infty}$-norm ball}\label{Projector}
	In this section, we shall review some attractive results on the projection onto the $\ell_{1,\infty}$-norm ball, which will be used in the subsequent text.
	
	Given a matrix $Q\in \mathbb{R}^{d\times n}$ and positive parameter $\gamma>0$, our aim is to reformulate the following projection problem
	\begin{equation}\label{prob:projection}
		\min_{W} \Big\{ \frac{1}{2} \|W-Q\|_F^2 \mid \|W\|_{1,\infty} \leq \gamma \Big\}.
	\end{equation}
	For convenience, we first introduce some notations. For each $i=1,2,\ldots,d$, let $\lambda_i>0$ be the smallest integer $k\in \{1,2,\ldots,n\}$ such that
	\begin{equation*}
		\lvert Q_{ik} \rvert \geq \lvert Q_{ij}\rvert , \, \ j = 1,2,\ldots,n.
	\end{equation*}	
	Denote the matrix $T\in \mathbb{R}^{d\times n}$ by
	\begin{equation}\label{equa:T}
		T_{ij}:=
		\left\{ 
		\begin{array}{ll}
			{\rm sign}(Q_{ij}), & j = \lambda_i,  \\ [5pt]
			0,& {\rm otherwise}.
		\end{array} \right.
	\end{equation} 	
	Let $E:=[e^1,e^2,\ldots,e^d]^\top \in \mathbb{R}^{d\times nd}$, where for each $i = 1,\ldots,d$, $e^i =[e_1^i,\ldots,e_{nd}^i]^\top\in \mathbb{R}^{nd}$ is defined by
	\begin{equation*}
		e_k^i:=
		\left\{ 
		\begin{array}{ll}
			T_{i \lambda_{i}}, & k = \lambda(i),  \\ [5pt]
			0,& {\rm otherwise}, 
		\end{array} \right. 
		\ k = 1,\ldots,nd,
	\end{equation*} 
	where $\lambda(i)=(\lambda_i-1)d+i$. Let $w={\rm Vec}(W)$,  $q={\rm Vec}(Q)$, $b={\rm Vec}(T)$, and $D=F-\mathbf{1}_n\otimes E$ with $F = {\rm Diag}({\rm sign}(q))$, where $\mathbf{1}_n$ denotes the vector of all ones, and $\otimes$ is the Kronecker product. Invoking \cite[Proposition 2.1]{Lin2024An}, 
	one can readily rewrite Problem (\ref{prob:projection}) as
	\begin{equation}\label{prob:vec_projection1}
		\min_{w} \Big\{ \frac{1}{2} \|w-q\|^2 \mid b^\top w \le \gamma, \ Dw\le 0, \ Fw\ge0\Big\}.
	\end{equation}
	It is obvious that the feasible set of Problem (\ref{prob:vec_projection1}) is nonempty and polyhedral convex.

	Next, we want to present a nontrivial formula for the generalized HS-Jacobian  (cf. \cite{Han1997Newton,Li2018On,Lin2019Efficient}) of the solution to Problem (\ref{prob:vec_projection1}). Denote 
	\begin{align*}
		B=\left [
		\begin{array}{c}
			-F\\ 
			D \\
			b^\top 
		\end{array} \right]
		\in \mathbb{R}^{(2nd+1) \times nd} \ \ \mbox{and} \  \
		\ c = \left [ 
		\begin{array}{c}
			\textbf{0} \\
			\gamma
		\end{array}
		\right ]
		\in \mathbb{R}^{2nd+1}.
	\end{align*} 
	Problem (\ref{prob:vec_projection1}) then amounts to
	\begin{equation}\label{prob:vec_projection2}
		\min_{w} \Big\{\frac{1}{2} \|w-q\|^2 \mid Bw \leq c\Big\}.
	\end{equation} 
	The feasible set of Problem (\ref{prob:vec_projection2}) is defined by
	\begin{align*}
		\mathcal{C}:=\{w \in \mathbb{R}^{nd} \mid Bw \leq c\}.
	\end{align*}
	For given $q\in\mathbb{R}^{nd}$, we denote the unique optimal solution to Problem (\ref{prob:vec_projection2}) by $\Pi_{\mathcal{C}}(q)$. There exists a Lagrange multiplier $\mu\in\mathbb{R}_+^{2nd+1}$ satisfying the Karush-Kuhn-Tucker (KKT) conditions for Problem (\ref{prob:vec_projection2})
	\begin{equation}\label{equa:kkt}
		\left\{ 
		\begin{array}{l}
			\Pi_\mathcal{C}(q)-q+B^\top \mu =0,  \\ [5pt]
			B\Pi_\mathcal{C}(q)-c\leq 0,\\ [5pt]
			\mu^\top  (B\Pi_\mathcal{C}(q)-c) = 0.
		\end{array} \right.
	\end{equation} 
	Recall that the generalized HS-Jacobian for $\Pi_\mathcal{C}(\cdot)$ at any $q\in\mathbb{R}^{nd}$ as in \cite{Han1997Newton} is given by
	\begin{equation}\label{equa:hs_jacobi}
		\mathcal{N}_\mathcal{C}(q):=\big\{N\in\mathbb{R}^{nd \times nd}\mid N=I_{nd}-B_K^\top (B_K  B_K^\top )^{-1}B_K,K \in \mathcal{K}_{\mathcal{C}} (q) \big\},
	\end{equation}
	where
	\begin{align*}
		\mathcal{K}_{\mathcal{C}} (q):= \big\{K \subseteq \{1,\ldots,2nd+1\} \mid {\rm supp} (\mu) \subseteq K \subseteq \mathcal{I}(q),
		B_K \ \mbox{is of full row rank} \big\},
	\end{align*}
	here, $\mu$ is a Lagrange multiplier satisfying the KKT conditions (\ref{equa:kkt}), ${\rm supp}(\mu)=\{i\mid \mu_i\neq 0,i=1,\ldots,2nd+1\}$, and $\mathcal{I}(q) := \{ i \mid (B\Pi_\mathcal{C}(q))_i =c_i, i=1,2,\ldots,2nd+1\}$. To effectively implement the algorithm designed for Problem (\ref{mtlproblem1}), for any $q\in\mathbb{R}^{nd}$, we need to construct at least one computable element in $\mathcal{N}_\mathcal{C}(q)$ explicitly.
	Denote
	\begin{align}\label{equa:N0}
		N_0:=I_{nd}-B_{\mathcal{I}(q)}^\top (B_{\mathcal{I}(q)}B_{\mathcal{I}(q)}^\top )^{\dagger}B_{\mathcal{I}(q)}.
	\end{align}
	It follows from \cite[Lemma 2.1]{Han1997Newton} and \cite[Theorem 1 \& Proposition 1]{Li2020On} that $N_0 \in \mathcal{N}_\mathcal{C}(q)$. To simplify the procedure for computing a matrix $N_0$, we denote
	\begin{align}\label{alpha}
		\alpha_i=
		\left\{ 
		\begin{array}{ll}
			d, & {\rm rem}(i,d)=0,  \\ [5pt]
			{\rm rem}(i,d), & {\rm otherwise},
        \end{array} \right.
		\ i = 1,\ldots,nd
	\end{align} 
	and define some useful index sets by
	\begin{align}\label{idx}
		\mathcal{K}_{1} & :=\{ i\in \{1,\ldots,nd\} \mid j=\lambda(\alpha_i), (\Pi_\mathcal{C}(q))_j = 0, j\in\{1,\ldots,nd\}\}, \nonumber \\
		\mathcal{K}_{2} & :=  \{ i\in \{1,\ldots,nd\} \mid i=\lambda(\alpha_i) \}, \nonumber \\ 
		\mathcal{I}_{1} & := \{ i\in \{1,\ldots,nd\} \mid (F\Pi_\mathcal{C}(q))_i = 0\}, \nonumber\\
		\mathcal{I}_{2} & := \{ i\in \{1,\ldots,nd\} \mid (D\Pi_\mathcal{C}(q))_i = 0 \}, \\ 
		\mathcal{I}_{3} & := \{ i\in \{1,\ldots,nd\} \mid F_{ii}\neq 0, i\in\mathcal{I}_{1}\}, \nonumber\\	 
		\mathcal{I}_{4} & :=  \mathcal{I}_{2} \backslash (\mathcal{K}_1\cup \mathcal{K}_2),\nonumber\\
		\mathcal{K}_{3} & :=\{ i\in \{1,\ldots,nd\} \mid i=\lambda(\alpha_j), j\in \mathcal{I}_{4}\},\nonumber\\
		\mathcal{K}_{4}(j) & :=\{ i\in \{1,\ldots,nd\} \mid j=\lambda(\alpha_i),i\neq j \}, \forall j\in \mathcal{K}_{3}. \nonumber  
	\end{align}
	It is easy to see that $\mathcal{I}_{4}=\bigcup_{k\in \mathcal{K}_3}\mathcal{K}_4(k)$ and $\mathcal{I}_{3}$, $\mathcal{I}_{4}$ are pairwise disjoint sets. 
	
	With the above preparations, we are in a position to present the explicit expression of the matrix $N_0$ given in (\ref{equa:N0}) in the following theorem. Since its proof can be directly obtained from \cite[Theorem 2.1]{Lin2024An}, we omit them here for brevity.
	
	\begin{theorem}\label{TheoN0}
		Let $b, q\in \mathbb{R}^{nd}$ and $\gamma>0$ be given. Denote
		\begin{align*}
			\beta:=\hat{d}-\sum_{i\in\mathcal{K}_3}\frac{\lvert\mathcal{K}_4(i)\rvert}{1+\lvert\mathcal{K}_4(i)\rvert} \ \ {\rm with} \ \ \hat{d}=
			\left\{ 
			\begin{array}{ll}
				\lvert\mathcal{K}_2\setminus(\mathcal{K}_1\cap\mathcal{K}_2)\rvert, & \mathcal{K}_1\neq \emptyset,  \\ [5pt]
				d,& {\rm otherwise}. 
			\end{array} \right.
		\end{align*} 
		Then, the matrix $N_0$ given in (\ref{equa:N0}) admits the closed-form expression:
		\begin{align*} 
			N_0:=
			\left\{ 
			\begin{array}{ll}
				G-\displaystyle{ \frac{1}{\beta}}ff^\top , & b^\top \Pi_\mathcal{C}(q)= \gamma,  \\ [5pt]
				G,& b^\top \Pi_\mathcal{C}(q)\neq \gamma, 
			\end{array} \right.{\sc }
		\end{align*}
		where $G\in\mathbb{R}^{nd\times nd}$ and $f\in\mathbb{R}^{nd}$ are given by for $i, j=1,\dots,nd$,
		\begin{align*}
			G_{ij}=
			\left\{ 
			\begin{array}{lll}
				1,& i=j, \ j\in\mathcal{I}_3^c\setminus(\mathcal{K}_3\cup\mathcal{I}_{4}),  \\ [5pt]
				\displaystyle{\frac{{\rm sign}(q_iq_j)}{1+\mid\mathcal{K}_4(\lambda(\alpha_i))\mid}}, & i\in\mathcal{K}_3\cup\mathcal{I}_{4},\ j\in\{\lambda(\alpha_i)\}\cup\mathcal{K}_4(\lambda(\alpha_i)), \\[5pt]
				0,& {\rm otherwise}
			\end{array} \right. 
		\end{align*} 
		and
		\begin{align*}
			f_i=
			\left\{ 
			\begin{array}{lll}
				{\rm sign}(q_i),& i \in\mathcal{K}_2\setminus(\mathcal{K}_3\cup(\mathcal{K}_1\cap\mathcal{K}_2)),  \\ [5pt]
				\displaystyle{ \frac{{\rm sign}(q_i)}{1+\mid\mathcal{K}_4(\lambda(\alpha_i))\mid}}, & i \in\mathcal{K}_3\cup\mathcal{I}_{4},  \\ [5pt]
				0,& {\rm otherwise}.
			\end{array} \right. 
		\end{align*} 
	\end{theorem}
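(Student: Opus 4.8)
The plan is to recognize $N_0$ for what it is geometrically. Since $I_{nd}-M^\top(MM^\top)^\dagger M$ is exactly the orthogonal projector onto $\ker M=(\mathrm{row}\,M)^\perp$, the matrix $N_0$ is nothing but the orthogonal projection onto $\ker B_{\mathcal{I}(q)}$. The decisive consequence is that $N_0$ depends \emph{only} on the row space of $B_{\mathcal{I}(q)}$, so I am free to replace $B_{\mathcal{I}(q)}$ by any matrix spanning the same rows; in particular I would discard every zero row and keep one maximal linearly independent subset of the active rows. The whole problem then reduces to describing $\ker B_{\mathcal{I}(q)}$ explicitly and projecting onto it.

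First I would classify the active rows using $F=\mathrm{Diag}(\mathrm{sign}(q))$ and $D=F-\mathbf 1_n\otimes E$. The row $i$ of $-F$ equals $-\mathrm{sign}(q_i)\,e_i^\top$, which is active for $i\in\mathcal{I}_1$ but nonzero exactly on $\mathcal{I}_3$ (where $q_i\neq0$), and it imposes $w_i=0$. The row $i$ of $D$ equals $\mathrm{sign}(q_i)e_i^\top-\mathrm{sign}(q_{\lambda(\alpha_i)})e_{\lambda(\alpha_i)}^\top$; it vanishes precisely when $i=\lambda(\alpha_i)$ (i.e. $i\in\mathcal{K}_2$), and otherwise, when active ($i\in\mathcal{I}_2$), it couples coordinate $i$ to the max-coordinate $\lambda(\alpha_i)$ of its feature by forcing $\mathrm{sign}(q_i)w_i=\mathrm{sign}(q_{\lambda(\alpha_i)})w_{\lambda(\alpha_i)}$. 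Finally there is the single dense row $b^\top$, present iff $b^\top\Pi_\mathcal{C}(q)=\gamma$, with support $\mathcal{K}_2$ and $b_k=\mathrm{sign}(q_k)$.

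Second I would treat the part coming from $F$ and $D$ alone. Let $A$ collect the type-$(-F)$ rows indexed by $\mathcal{I}_3$ together with the nonzero active $D$-rows, which after removing the $\mathcal{K}_1$-redundancies are indexed by $\mathcal{I}_4$. I would show $\ker A$ splits coordinatewise into three pieces: the killed coordinates $\mathcal{I}_3$ (forced to $0$); the free coordinates $\mathcal{I}_3^c\setminus(\mathcal{K}_3\cup\mathcal{I}_4)$ (unconstrained); and the coupled feature-blocks $\{k\}\cup\mathcal{K}_4(k)$, $k\in\mathcal{K}_3$, on which the signed equalities force $\mathrm{sign}(q_j)w_j$ to be constant across the block. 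The orthogonal projector onto this direct sum is block diagonal: $0$ on $\mathcal{I}_3$, the identity on the free coordinates, and the rank-one signed-averaging projector $\tfrac{1}{1+|\mathcal{K}_4(k)|}\,s_k s_k^\top$ on each block, where $(s_k)_j=\mathrm{sign}(q_j)$. Reading off entries, this assembled projector is exactly the matrix $G$. If $b^\top\Pi_\mathcal{C}(q)\neq\gamma$ the row $b^\top$ is inactive, $B_{\mathcal{I}(q)}$ and $A$ have the same row space, and $N_0=G$, settling that branch.

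Finally, when $b^\top\Pi_\mathcal{C}(q)=\gamma$, the row space of $B_{\mathcal{I}(q)}$ is spanned by $\mathrm{row}\,A$ together with $b$. I would invoke the rank-one downdate of an orthogonal projector upon appending one independent row, namely $N_0=G-\tfrac{Gb\,b^\top G}{b^\top G b}$, which is valid precisely because $Gb\neq0$ (i.e. $b\notin\mathrm{row}\,A$). Setting $f:=Gb$ and $\beta:=b^\top G b=\|Gb\|^2$ gives the stated form $N_0=G-\tfrac1\beta ff^\top$. Applying the block form of $G$ to $b=\sum_{k\in\mathcal{K}_2}\mathrm{sign}(q_k)e_k$ reproduces the claimed $f$ (each free max-coordinate yields $\mathrm{sign}(q_k)$, each coupled block yields $\tfrac{\mathrm{sign}(q_j)}{1+|\mathcal{K}_4(k)|}$), and $\beta=b^\top f$ collapses to $\hat d-\sum_{k\in\mathcal{K}_3}\tfrac{|\mathcal{K}_4(k)|}{1+|\mathcal{K}_4(k)|}$ after writing $\tfrac{|\mathcal{K}_4(k)|}{1+|\mathcal{K}_4(k)|}=1-\tfrac{1}{1+|\mathcal{K}_4(k)|}$ and counting the contributing max-coordinates via $\hat d=|\mathcal{K}_2\setminus(\mathcal{K}_1\cap\mathcal{K}_2)|$. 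The hard part will be the bookkeeping in the second step: proving that, after deleting zero rows, the surviving $D$-rows indexed by $\mathcal{I}_4$ together with the $\mathcal{I}_3$ unit rows are linearly independent and couple precisely the blocks $\{k\}\cup\mathcal{K}_4(k)$, while correctly absorbing the degenerate indices in $\mathcal{K}_1$ (where a feature's maximal entry is $0$ yet a difference constraint is active). Equivalently, one must verify that $B_{\mathcal{I}(q)}$ reduces to a full-row-rank matrix of this structured form, so that $(B_{\mathcal{I}(q)}B_{\mathcal{I}(q)}^\top)^\dagger$ becomes a genuine blockwise inverse and the rank-one downdate for $b$ is legitimate.
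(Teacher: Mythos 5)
Your argument is correct, and it is worth noting that the paper itself gives no proof of Theorem~\ref{TheoN0}: it defers entirely to \cite[Theorem 2.1]{Lin2024An}, where the result is obtained by an explicit case-by-case computation of $B_{\mathcal{I}(q)}^\top (B_{\mathcal{I}(q)}B_{\mathcal{I}(q)}^\top)^{\dagger}B_{\mathcal{I}(q)}$ after identifying the active index set. Your route is genuinely different and more conceptual: you use the identity $M^\top(MM^\top)^\dagger M=M^\dagger M$ to read $N_0$ as the orthogonal projector onto $\ker B_{\mathcal{I}(q)}$, so that only the row space of the active constraints matters and no full-row-rank verification is actually needed; you then describe that kernel as a direct sum of killed coordinates ($\mathcal{I}_3$), free coordinates, and signed one-dimensional feature blocks $\{k\}\cup\mathcal{K}_4(k)$, whose assembled projector is exactly $G$, and finally handle the active constraint $b^\top w=\gamma$ by the rank-one downdate $G-\frac{Gb\,b^\top G}{b^\top Gb}$ with $f=Gb$, $\beta=b^\top Gb$. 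I checked the entries: $Gb$ and $b^\top Gb$ do reproduce the stated $f$ and $\beta$ (using that $\{i\in\mathcal{K}_2:q_i\neq 0\}\cap\mathcal{I}_3=\mathcal{K}_1\cap\mathcal{K}_2$ up to indices with $q_i=0$, where $f_i=0$ anyway, and that $|\mathcal{K}_2|=d$), and $\beta>0$ holds automatically in the branch $b^\top\Pi_{\mathcal{C}}(q)=\gamma$ since $\beta=0$ would force $\Pi_{\mathcal{C}}(q)=0$, contradicting $\gamma>0$ --- a point you flag but could make explicit. Two pieces of bookkeeping that you correctly identify as the remaining work deserve emphasis: (i) the redundancy of the active $D$-rows indexed by $\mathcal{K}_1$ (they lie in the span of the active $-F$ rows because all nonzero-$q$ coordinates of a feature whose projected maximum vanishes are themselves forced to zero), and (ii) the set identity $\mathcal{I}_4=\bigcup_{k\in\mathcal{K}_3}\mathcal{K}_4(k)$, which the paper asserts without proof and which your block description of $\ker A$ silently relies on. With those two facts supplied, your proof is complete and arguably cleaner than a direct evaluation of the pseudoinverse.
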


	\section{The adaptive sieving strategy}\label{sec:AS}
	In this section, we are going to develop an adaptive sieving (AS) strategy \cite{Lin2020Adaptive} to generate a solution path for multi-task Lasso problems. The main idea of this strategy is to reduce the number of variables by sieving out a large number of inactive features, thereby reducing the dimension of multi-task Lasso problems and obtaining its solution paths with a sequence of given positive parameters. We denote our proposed algorithm by AS-{\sc Ssnpal}, since a semismooth Newton proximal augmented Lagrangian algorithm is utilized to solve inner problems of the AS strategy. 
	
	Let $m=\sum_{i=1}^n m_i$ and $y=[\hat{y}^1;\ldots;\hat{y}^n]\in\mathbb{R}^{m}$. Define the linear operator $\mathcal{X}:\mathbb{R}^{d\times n}\rightarrow \mathbb{R}^{m}$ by
	\begin{align*}
		\mathcal{X}(W):=[X^1\hat{w}^1;X^2\hat{w}^2;\ldots;X^n\hat{w}^n], \, \ \forall \ W\in\mathbb{R}^{d\times n}.
	\end{align*}
	Denote the feasible set of Problem (\ref{mtlproblem1}) by
	\begin{equation*}\label{Omega}
		\Omega_\gamma:=\{W\in\mathbb{R}^{d\times n} \mid \|W\|_{1,\infty} \leq \gamma\}.
	\end{equation*}
	Then, Problem (\ref{mtlproblem1}) can be reformulated as
	\begin{equation}\label{problemP}\tag{${\rm P}_\gamma$}
		\begin{split}
			\min_{W} & \ \   \frac{1}{2} \|y-\mathcal{X}(W)\|^2+\chi_{\Omega_\gamma}(W),
		\end{split}
	\end{equation} 
	where $\chi_{\Omega_\gamma}:\mathbb{R}^{d\times n}\rightarrow\mathbb{R}\cup \{+\infty\}$ is the indicator function of the convex closed set $\Omega_\gamma$. Denote the optimal solution set of Problem (\ref{problemP}) by $\Theta_{\gamma}$ and the associated proximal residual function $R_\gamma:\mathbb{R}^{d \times n}\rightarrow \mathbb{R}^{d \times n}$ by
	\begin{equation}\label{equa:R_gamma}
		R_\gamma(W):=W-{\rm Prox}_{\chi_{\Omega_{\gamma}}}(W-\mathcal{X}^*(\mathcal{X}W-y)), \ \forall \ W\in \mathbb{R}^{d \times n}.
	\end{equation}
	Obviously, the set $\Theta_{\gamma}$ is nonempty and compact. It then follows from the KKT condition of (\ref{problemP}) that $W^*\in\Theta_{\gamma}$ if and only if $R_\gamma(W^*)=0$. 
	
	\begin{algorithm}
		\caption{\small {\bf An adaptive sieving strategy for Problem (\ref{problemP})}}\label{algor:AS}
		\hspace*{0.01in} \raggedright {\bf Input:} a sequence of parameter: $0<\gamma_0<\gamma_1<\ldots<\gamma_p$, and tolerance $\epsilon>0$. \\
		\hspace*{0.01in} \raggedright {\bf Output:} a solution path: $W^*(\gamma_0),W^*(\gamma_1), \ldots, W^*(\gamma_p)$. \\
		\begin{algorithmic}[1]
			\STATE \textbf{Initialization}: for $\gamma_0>0$, find
			\begin{equation}\label{equa:Delta_0}
				W^*(\gamma_0)\in \underset{W\in\mathbb{R}^{d \times n}} {\rm arg\ min}\ \Big\{ \frac{1}{2} \|y-\mathcal{X}(W)\|^2+\chi_{\Omega_{\gamma_0}}(W)-\langle \Delta_0,W\rangle \Big\},
			\end{equation}
			where $\Delta_0\in\mathbb{R}^{d \times n}$ is an error matrix such that $\|\Delta_0\|_F\leq\epsilon/\sqrt{2}$. Let
			\begin{equation*}
				I^*(\gamma_0)=\{(i,j)\mid (W^*(\gamma_0))_{ij}\neq0,i=1,\ldots,d;\ j=1,\ldots,n \}.
			\end{equation*}
			
			\FOR
			{$k=1, 2, \ldots, p$}
			\STATE Let $I^0(\gamma_k)=I^*(\gamma_{k-1})$.
			Solve
			\begin{equation*}\label{prob:Delta_0}
				W^0(\gamma_k)\in \underset{W\in\mathbb{R}^{d \times n}} {\rm arg\ min}\ \Big\{ \frac{1}{2} \|y-\mathcal{X}(W)\|^2+\chi_{\Omega_{\gamma_k}}(W)-\langle \Delta_k^0,W\rangle \mid W_{\bar{I}^0(\gamma_k)}=0 \Big\},
			\end{equation*}
			where $\bar{I}^0(\gamma_k)$ is the complement of $I^0(\gamma_k)$, i.e.,
			\begin{equation*}
				\bar{I}^0(\gamma_k)=\{(i,j)\mid (W^*(\gamma_k))_{ij}=0,i=1,\ldots,d;\ j=1,\ldots,n \},
			\end{equation*}
			and $\Delta_k^0\in\mathbb{R}^{d \times n}$ is an error matrix such that $\|\Delta_k^0\|_F\leq\epsilon/\sqrt{2}$, $(\Delta_k^0)_{\bar{I}^0(\gamma_k)}=0$.
			\STATE Compute $R_{\gamma_k}(W^0(\gamma_k))$ and set $l=0$.
			\WHILE {$R_{\gamma_k}(W^l(\gamma_k))>\epsilon$}
			\STATE Create $J^{l+1}(\gamma_k)$:
			\begin{equation}\label{equa:J}
				J^{l+1}(\gamma_k)= \Big\{(i,j)\in	\bar{I}^l(\gamma_k)\mid (\mathcal{X}^*(y-\mathcal{X}W^l(\gamma_k)))_{ij}\notin (\partial \chi_{\Omega_{\gamma_k}}(W^l(\gamma_k))+\frac{\epsilon}{\sqrt{2\lvert \bar{I}^l(\gamma_k)\rvert}}\mathbb{B}_\infty)_{ij}   \Big\},
			\end{equation}
			where $\bar{I}^l(\gamma_k)$ is the complement of $I^l(\gamma_k)$,  $\mathbb{B}_\infty$ denotes the infinity norm unit ball, and $(\mathcal{P})_{ij}$ is the projection of the set $\mathcal{P}$ onto the $(i,j)$-th dimension.
			\STATE Update $I^{l+1}(\gamma_k)\leftarrow I^{l}(\gamma_k)\cup J^{l+1}(\gamma_k)$.
			\STATE Solve the following problem
			\begin{equation}\label{equa:Delta_l1}
				W^{l+1}(\gamma_k)\in \underset{W\in\mathbb{R}^{d \times n}} {\rm arg\ min}\ \Big\{ \frac{1}{2} \|y-\mathcal{X}(W)\|^2+\chi_{\Omega_{\gamma_k}}(W)-\langle \Delta_k^{l+1},W\rangle \mid W_{\bar{I}^{l+1}(\gamma_k)}=0 \Big\},
			\end{equation}
			where $\Delta_k^{l+1}\in\mathbb{R}^{d \times n}$ is an error matrix such that $\|\Delta_k^{l+1}\|_F\leq\epsilon/\sqrt{2}$, $(\Delta_k^{l+1})_{\bar{I}^{l+1}(\gamma_k)}=0$.
			\STATE Compute $R_{\gamma_k}(W^{l+1}(\gamma_k))$ and set $l\leftarrow l+1$.
			\ENDWHILE
			\STATE Set $W^*(\gamma_k)=W^l(\gamma_k)$ and $I^*(\gamma_k)=I^l(\gamma_k)$.
			\ENDFOR
		\end{algorithmic}
	\end{algorithm}
	
	In Algorithm \ref{algor:AS}, it is worthwhile to mention that the existence of error vectors $\Delta^0$ and $\{\Delta_k^{l+1}\}$ in (\ref{equa:Delta_0}) and (\ref{equa:Delta_l1}) means the involved minimization problems cannot be solved exactly. In fact, they are not given in prior but incurred automatically when the original problems (with $\Delta^0=0$ and $\{\Delta_k^{l+1}\}=0$) are solved inexactly. As an illustration, the following proposition will show how the error $\Delta^0$ is generated.
	
	
	
	\begin{proposition}
		Given the parameter $\gamma_0>0$. Denote the Lipschitz constant of $\mathcal{X}^*(\mathcal{X}W-y)$ by $L$, i.e., the largest eigenvalue of $\mathcal{X}^*\mathcal{X}$. Then the optimal solution $W^*(\gamma_0)$ in (\ref{equa:Delta_0}) can be computed by
		\begin{equation}\label{equa:W_gamma0}
			W^*(\gamma_0): = {\rm Prox}_{\chi_{\Omega_{\gamma_0}}}(\widehat{W}(\gamma_0)-\mathcal{X}^*(\mathcal{X}\widehat{W}(\gamma_0)-y)),
		\end{equation}
		where $\widehat{W}(\gamma_0)$ is an approximate solution to 
		\begin{equation}\label{prob:P_gamma0}
			\begin{split}
				\min_{W} & \ \   \frac{1}{2} \|y-\mathcal{X}(W)\|^2+\chi_{\Omega_{\gamma_0}}(W)
			\end{split}
		\end{equation}
		such that 
		\begin{equation}\label{equa:R_X}
			\|R_{\gamma_0}(\widehat{W}(\gamma_0))\|\le \frac{\epsilon}{\sqrt{2}(1+L)}.
		\end{equation}
	\end{proposition}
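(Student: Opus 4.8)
The plan is to show that the matrix $W^*(\gamma_0)$ produced by the proximal formula (\ref{equa:W_gamma0}) is an \emph{exact} minimizer of the perturbed problem (\ref{equa:Delta_0}) for an appropriately identified error matrix $\Delta_0$, and then to verify the size constraint $\|\Delta_0\|_F \le \epsilon/\sqrt{2}$ using the residual bound (\ref{equa:R_X}). First I would record the first-order optimality condition for (\ref{equa:Delta_0}). Since its objective is convex, being the sum of the smooth quadratic $\frac{1}{2}\|y-\mathcal{X}(W)\|^2$ with gradient $\mathcal{X}^*(\mathcal{X}W-y)$, the indicator $\chi_{\Omega_{\gamma_0}}$, and the linear term $-\langle\Delta_0,W\rangle$, the point $W^*(\gamma_0)$ minimizes (\ref{equa:Delta_0}) if and only if
\begin{equation*}
\Delta_0-\mathcal{X}^*(\mathcal{X}W^*(\gamma_0)-y)\in\partial\chi_{\Omega_{\gamma_0}}(W^*(\gamma_0)).
\end{equation*}

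Next I would write out the inclusion characterizing the proximal step. By definition of the proximal mapping, the identity $W^*(\gamma_0)={\rm Prox}_{\chi_{\Omega_{\gamma_0}}}(Z)$ with $Z=\widehat{W}(\gamma_0)-\mathcal{X}^*(\mathcal{X}\widehat{W}(\gamma_0)-y)$ is equivalent to $Z-W^*(\gamma_0)\in\partial\chi_{\Omega_{\gamma_0}}(W^*(\gamma_0))$. Comparing this with the inclusion above, I would \emph{define} $\Delta_0$ by forcing the two left-hand sides to coincide; solving for $\Delta_0$ and using that the gradient difference telescopes, $\mathcal{X}^*(\mathcal{X}\widehat{W}(\gamma_0)-y)-\mathcal{X}^*(\mathcal{X}W^*(\gamma_0)-y)=\mathcal{X}^*\mathcal{X}(\widehat{W}(\gamma_0)-W^*(\gamma_0))$, yields
\begin{equation*}
\Delta_0=\big(\widehat{W}(\gamma_0)-W^*(\gamma_0)\big)-\mathcal{X}^*\mathcal{X}\big(\widehat{W}(\gamma_0)-W^*(\gamma_0)\big).
\end{equation*}
With $\Delta_0$ so defined, the optimality inclusion for (\ref{equa:Delta_0}) holds by construction, so $W^*(\gamma_0)$ is genuinely an exact solution of the perturbed problem.

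The final step is the norm estimate. The crucial observation is that, by the definition (\ref{equa:R_gamma}) of the residual together with the proximal formula (\ref{equa:W_gamma0}), one has exactly $\widehat{W}(\gamma_0)-W^*(\gamma_0)=R_{\gamma_0}(\widehat{W}(\gamma_0))$. Applying the triangle inequality to the expression for $\Delta_0$ and bounding $\|\mathcal{X}^*\mathcal{X}(\widehat{W}(\gamma_0)-W^*(\gamma_0))\|_F\le L\,\|\widehat{W}(\gamma_0)-W^*(\gamma_0)\|_F$, since $L$ is the largest eigenvalue (hence the spectral norm) of the self-adjoint positive semidefinite operator $\mathcal{X}^*\mathcal{X}$, I obtain
\begin{equation*}
\|\Delta_0\|_F\le(1+L)\,\|R_{\gamma_0}(\widehat{W}(\gamma_0))\|\le(1+L)\cdot\frac{\epsilon}{\sqrt{2}(1+L)}=\frac{\epsilon}{\sqrt{2}},
\end{equation*}
which is precisely the admissibility requirement on $\Delta_0$ in the initialization step. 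I expect the only delicate point to be the bookkeeping in the middle step: one must check that the mismatch between the gradients evaluated at $\widehat{W}(\gamma_0)$ and at $W^*(\gamma_0)$ collapses exactly into the single term $\mathcal{X}^*\mathcal{X}(\widehat{W}(\gamma_0)-W^*(\gamma_0))$, as this linearity is exactly what produces the factor $(1+L)$ that cancels against the denominator in (\ref{equa:R_X}).
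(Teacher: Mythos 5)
Your proposal is correct and follows essentially the same route as the paper: the error matrix you construct, $\Delta_0=(\widehat{W}(\gamma_0)-W^*(\gamma_0))-\mathcal{X}^*\mathcal{X}(\widehat{W}(\gamma_0)-W^*(\gamma_0))$, is exactly the paper's choice $\Delta^0=R_{\gamma_0}(\widehat{W}(\gamma_0))+\mathcal{X}^*\mathcal{X}(W^*(\gamma_0)-\widehat{W}(\gamma_0))$ once one uses $R_{\gamma_0}(\widehat{W}(\gamma_0))=\widehat{W}(\gamma_0)-W^*(\gamma_0)$, and your final estimate $\|\Delta_0\|_F\le(1+L)\|R_{\gamma_0}(\widehat{W}(\gamma_0))\|\le\epsilon/\sqrt{2}$ is the paper's bound stated in the logically cleaner direction. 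The only item the paper includes that you omit is a short argument (via a cited lemma) that a point $\widehat{W}(\gamma_0)$ satisfying the residual tolerance (\ref{equa:R_X}) actually exists, but that is peripheral to the main construction.
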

	\begin{proof}
		For any $i=1,2,\ldots$, let {$W^i$} be a sequence that converges to the optimal solution of Problem (\ref{prob:P_gamma0}). Denote $\Delta^i:=R_{\gamma_0}(W^i)+\mathcal{X}^*\mathcal{X}({\rm Prox}_{\chi_{\Omega_{\gamma_0}}}(W^i-\mathcal{X}^*(\mathcal{X}W^i-y))-W^i)$. Then, from \cite[Lemma 4.5]{Du2015An}, it follows that $\lim_{i\rightarrow \infty}\|\Delta^i\|=0$, which means the existence of $\widehat{W}(\gamma_0)$ in (\ref{equa:R_X}).
		
		Next, we intend to demonstrate why $W^*(\gamma_0)$ can be computed by (\ref{equa:W_gamma0}). By the definition of $R_\gamma$, one has
		\begin{equation*}
			R_{\gamma_0}(\widehat{W}(\gamma_0))=\widehat{W}(\gamma_0)-{\rm Prox}_{\chi_{\Omega_{\gamma_0}}}(\widehat{W}(\gamma_0)-\mathcal{X}^*(\mathcal{X}\widehat{W}(\gamma_0)-y))=\widehat{W}(\gamma_0)-W^*(\gamma_0). 
		\end{equation*}
		Combining this with (\ref{equa:W_gamma0}), we obtain
		\begin{equation*}
			R_{\gamma_0}(\widehat{W}(\gamma_0))-\mathcal{X}^*(\mathcal{X}\widehat{W}(\gamma_0)-y)\in \partial \chi_{\Omega_{\gamma_0}}(W^*(\gamma_0)). 
		\end{equation*}
		If we choose $\Delta^0:=R_{\gamma_0}(\widehat{W}(\gamma_0))+\mathcal{X}^*\mathcal{X}(W^*(\gamma_0)-\widehat{W}(\gamma_0))$, it then holds 
		\begin{equation*}
			\Delta^0\in \mathcal{X}^*(\mathcal{X}W^*(\gamma_0)-y)+\partial \chi_{\Omega_{\gamma_0}}(W^*(\gamma_0)),
		\end{equation*} 
		which implies that $W^*(\gamma_0)$ is an optimal solution of 
		\begin{equation*}
			\min_{W\in \mathbb{R}^{d\times n}}\ \Big\{ \frac{1}{2} \|y-\mathcal{X}(W)\|^2+\chi_{\Omega_{\gamma_0}}(W)-\langle \Delta_0,W\rangle \Big\}
		\end{equation*}
		with the above $\Delta^0$. Moreover, we have
		\begin{align*}
			\epsilon/\sqrt{2} \ge\|\Delta^0\|_F=&\|R_{\gamma_0}(\widehat{W}(\gamma_0))+\mathcal{X}^*\mathcal{X}(W^*(\gamma_0)-\widehat{W}(\gamma_0))\|_F \\
			\le& \|R_{\gamma_0}(\widehat{W}(\gamma_0))\|_F+\|\mathcal{X}^*\mathcal{X}(W^*(\gamma_0)-\widehat{W}(\gamma_0))\|_F\\
			\le&\|R_{\gamma_0}(\widehat{W}(\gamma_0))\|_F+L\|W^*(\gamma_0)-\widehat{W}(\gamma_0)\|_F\\
			=&(1+L)\|R_{\gamma_0}(\widehat{W}(\gamma_0))\|_F,
		\end{align*}  
		where the first inequality follows from the triangle inequality and the second inequality uses the Lipschitz property of  $\mathcal{X}^*(\mathcal{X}W-y)$. Since $(1+L)\|R_{\gamma_0}(\widehat{W}(\gamma_0))\|_F$ is the least upper bound of $\|\Delta^0\|_F$, we have 
		\begin{equation*}
			(1+L)\|R_{\gamma_0}(\widehat{W}(\gamma_0))\|_F\le \epsilon/\sqrt{2}
		\end{equation*}
		which leads to the inequality (\ref{equa:R_X}).
	\end{proof}

	Now, we shall establish the convergence results of Algorithm \ref{algor:AS} in the following theorem.
	\begin{theorem}
		The solution path $(W^*(\gamma_0),W^*(\gamma_1), \ldots, W^*(\gamma_p))$ generated by Algorithm \ref{algor:AS} are approximate optimal solutions to a sequence problem $({\rm P}_{\gamma_0},{\rm P}_{\gamma_1}, \ldots, {\rm P}_{\gamma_p})$, that is, 
		\begin{equation*}
			\|R_{\gamma_k}(W^*(\gamma_k))\|_F\le \epsilon, \ k = 0,1,\ldots,p.
		\end{equation*}
	\end{theorem}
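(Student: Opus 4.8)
The plan is to reduce the whole statement to a single estimate: if $W\in\Omega_\gamma$ and $\Delta\in\mathcal{X}^*(\mathcal{X}W-y)+\partial\chi_{\Omega_\gamma}(W)$, then $\|R_\gamma(W)\|_F\le\|\Delta\|_F$. To see this, write $g:=\mathcal{X}^*(\mathcal{X}W-y)$; the inclusion says $\Delta-g\in\partial\chi_{\Omega_\gamma}(W)$, which is equivalent to $W={\rm Prox}_{\chi_{\Omega_\gamma}}(W+\Delta-g)$, whereas by definition $R_\gamma(W)=W-{\rm Prox}_{\chi_{\Omega_\gamma}}(W-g)$. Subtracting and using the global $1$-Lipschitz property of the proximal mapping gives $\|R_\gamma(W)\|_F=\|{\rm Prox}_{\chi_{\Omega_\gamma}}(W+\Delta-g)-{\rm Prox}_{\chi_{\Omega_\gamma}}(W-g)\|_F\le\|\Delta\|_F$. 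This is exactly the computation already carried out in the preceding Proposition, so I would record it once as an auxiliary estimate. The case $k=0$ is then immediate: by construction $W^*(\gamma_0)$ solves the $\Delta_0$-perturbed problem, i.e. $\Delta_0\in\mathcal{X}^*(\mathcal{X}W^*(\gamma_0)-y)+\partial\chi_{\Omega_{\gamma_0}}(W^*(\gamma_0))$ with $\|\Delta_0\|_F\le\epsilon/\sqrt{2}$, whence $\|R_{\gamma_0}(W^*(\gamma_0))\|_F\le\epsilon/\sqrt{2}\le\epsilon$.

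For $k\ge1$ I would prove two facts about the inner \textbf{while} loop: that it terminates after finitely many passes, and that the iterate it returns already satisfies the bound. Both follow from the key claim: \emph{if $J^{l+1}(\gamma_k)=\emptyset$, then $\|R_{\gamma_k}(W^l(\gamma_k))\|_F\le\epsilon$.} To prove it I would exhibit a single matrix $\widetilde\Delta$ with $\widetilde\Delta\in\mathcal{X}^*(\mathcal{X}W^l(\gamma_k)-y)+\partial\chi_{\Omega_{\gamma_k}}(W^l(\gamma_k))$ and $\|\widetilde\Delta\|_F\le\epsilon$, and then invoke the auxiliary estimate. The matrix $\widetilde\Delta$ is assembled from two disjoint blocks. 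On the active block $I^l(\gamma_k)$, the optimality condition of the reduced problem (whose extra constraint $W_{\bar I^l(\gamma_k)}=0$ contributes a normal-cone term supported on $\bar I^l(\gamma_k)$) yields a subgradient $\xi\in\partial\chi_{\Omega_{\gamma_k}}(W^l(\gamma_k))$ with $\xi_{I^l(\gamma_k)}=(\Delta_k^l-g^l)_{I^l(\gamma_k)}$, where $g^l=\mathcal{X}^*(\mathcal{X}W^l(\gamma_k)-y)$; hence $\widetilde\Delta_{I^l(\gamma_k)}=(\Delta_k^l)_{I^l(\gamma_k)}$, whose Frobenius norm is at most $\|\Delta_k^l\|_F\le\epsilon/\sqrt{2}$. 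On the inactive block $\bar I^l(\gamma_k)$, emptiness of $J^{l+1}(\gamma_k)$ means, coordinate by coordinate, that $(\mathcal{X}^*(y-\mathcal{X}W^l(\gamma_k)))_{ij}$ lies within distance $\epsilon/\sqrt{2|\bar I^l(\gamma_k)|}$ of the $(i,j)$-th projection of $\partial\chi_{\Omega_{\gamma_k}}(W^l(\gamma_k))$, so $\widetilde\Delta_{ij}$ can be taken with $|\widetilde\Delta_{ij}|\le\epsilon/\sqrt{2|\bar I^l(\gamma_k)|}$, giving $\|\widetilde\Delta_{\bar I^l(\gamma_k)}\|_F\le\sqrt{|\bar I^l(\gamma_k)|}\cdot\epsilon/\sqrt{2|\bar I^l(\gamma_k)|}=\epsilon/\sqrt{2}$. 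Since the two blocks have disjoint supports, the Pythagorean identity gives $\|\widetilde\Delta\|_F\le\sqrt{\epsilon^2/2+\epsilon^2/2}=\epsilon$, proving the claim.

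Finite termination then follows by a monotonicity-and-counting argument. The active sets satisfy $I^{l+1}(\gamma_k)=I^l(\gamma_k)\cup J^{l+1}(\gamma_k)\supseteq I^l(\gamma_k)$ and live in the finite index set $\{1,\dots,d\}\times\{1,\dots,n\}$ of size $nd$. As long as the loop has not exited, $R_{\gamma_k}(W^l(\gamma_k))>\epsilon$, so by the contrapositive of the claim $J^{l+1}(\gamma_k)\ne\emptyset$, i.e. the active set grows strictly; this can happen at most $nd$ times, so the loop halts. At exit the loop guard forces $\|R_{\gamma_k}(W^*(\gamma_k))\|_F\le\epsilon$, and $W^*(\gamma_k)=W^l(\gamma_k)$, $I^*(\gamma_k)=I^l(\gamma_k)$ are well defined. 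Combining the base case $k=0$ with this argument for each $k=1,\dots,p$ yields $\|R_{\gamma_k}(W^*(\gamma_k))\|_F\le\epsilon$ for all $k$.

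The step I expect to be the main obstacle is the assembly of $\widetilde\Delta$ into one genuine element of $\mathcal{X}^*(\mathcal{X}W^l(\gamma_k)-y)+\partial\chi_{\Omega_{\gamma_k}}(W^l(\gamma_k))$. The active block pins down a specific subgradient $\xi$, whereas emptiness of $J^{l+1}(\gamma_k)$ only supplies, for each inactive coordinate separately, a possibly different near-subgradient realizing the coordinate-wise bound; gluing these per-coordinate choices to the fixed active value into one admissible subgradient is not automatic for a general convex indicator. I would resolve this using the polyhedrality of $\Omega_{\gamma_k}$ and the explicit description of the normal cone to the $\ell_{1,\infty}$-ball underlying Section \ref{Projector} (in particular Theorem \ref{TheoN0}): at a feasible $W^l(\gamma_k)$ the inactive coordinates, where $W^l(\gamma_k)_{ij}=0$, can be adjusted within the subdifferential independently of the active-block constraints, so the coordinate-wise screening guarantees do assemble into a single admissible $\widetilde\Delta$. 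Verifying this separability carefully is where the real work lies; the remaining nonexpansiveness and counting arguments are routine.
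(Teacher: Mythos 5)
Your argument is essentially the paper's own proof: the paper likewise argues by contraposition that emptiness of $J^{l+1}(\gamma_k)$ together with the KKT conditions of the reduced problem forces $\|R_{\gamma_k}(W^l(\gamma_k))\|_F^2\le \epsilon^2/2+\epsilon^2/2$, splitting the residual over the active block $I^l(\gamma_k)$ (bounded via $\|\Delta_k^l\|_F\le\epsilon/\sqrt{2}$) and the inactive block $\bar{I}^l(\gamma_k)$ (bounded via the entrywise tolerance $\epsilon/\sqrt{2\lvert\bar{I}^l(\gamma_k)\rvert}$) exactly as you do, and then concludes finite termination from the strict growth of the nested active sets. The only cosmetic difference is that you package the two blocks into a single perturbation $\widetilde\Delta$ and invoke nonexpansiveness of the proximal mapping once, whereas the paper applies it separately to each block; the coordinate-wise-to-global assembly step you flag as the main obstacle is present in the paper's proof as well --- it passes from the per-coordinate memberships $(\mathcal{X}^*(y-\mathcal{X}W^l(\gamma_k))+\widehat{\Delta}^l_k)_{ij}\in(\partial\chi_{\Omega_{\gamma_k}}(W^l(\gamma_k)))_{ij}$ directly to the restricted proximal identity on $\bar{I}^l(\gamma_k)$ without further justification --- so your proposal is, if anything, more candid about where the real work lies.
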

	\begin{proof}
		For any given $k=0,1,\ldots,p$, we first prove that for some $l\ge 0$, $J^{l+1}(\gamma_k)$ given in (\ref{equa:J}) is a nonempty index set as long as $\|R_{\gamma_k}(W^l(\gamma_k))\|_F> \epsilon$. To reach a contradiction, we assume that there exists $k\in\{0,1,\ldots,p\}$ such that if $\|R_{\gamma_k}(W^l(\gamma_k))\|_F>\epsilon$, the index set $J^{l+1}(\gamma_k)$ is empty, which yields that for any $(i,j)\in \bar{I}^l(\gamma_k)$,
		\begin{equation*}
			(\mathcal{X}^*(y-\mathcal{X}W^l(\gamma_k)))_{ij}\in (\partial \chi_{\Omega_{\gamma_k}}(W^l(\gamma_k))+\frac{\epsilon}{\sqrt{2\lvert \bar{I}^l(\gamma_k)\rvert}}\mathbb{B}_\infty)_{ij}.
		\end{equation*}
		Then, there exists a matrix $\widehat{\Delta}^l_k\in\mathbb{R}^{d\times n}$ with $(\widehat{\Delta}^l_k)_{I^l(\gamma_k)}=0$ and $\|\widehat{\Delta}^l_k\|_\infty\le \frac{\epsilon}{\sqrt{2\lvert \bar{I}^l(\gamma_k)\rvert}}$ such that
		\begin{equation}\label{equa:hat_Delta}
			(\mathcal{X}^*(y-\mathcal{X}W^l(\gamma_k))+\widehat{\Delta}^l_k)_{ij}\in (\partial \chi_{\Omega_{\gamma_k}}(W^l(\gamma_k)))_{ij}, \ \forall \ (i,j)\in \bar{I}^l(\gamma_k),
		\end{equation}
		which implies
		\begin{equation*}
			(W^l(\gamma_k))_{\bar{I}^l(\gamma_k)} = ({\rm Prox}_{\chi_{\Omega_{\gamma_k}}}(W^l(\gamma_k)+\mathcal{X}^*(y-\mathcal{X}W^l(\gamma_k))+\widehat{\Delta}^l_k))_{\bar{I}^l(\gamma_k)}.
		\end{equation*}
		Thus, we have 
		\begin{equation}\label{equa:R1}
			\begin{split}
				&\|(W^l(\gamma_k)- {\rm Prox}_{\chi_{\Omega_{\gamma_k}}}(W^l(\gamma_k)+\mathcal{X}^*(y-\mathcal{X}W^l(\gamma_k))_{\bar{I}^l(\gamma_k)}\|_F^2\\
				=&\|({\rm Prox}_{\chi_{\Omega_{\gamma_k}}}(W^l(\gamma_k)+\mathcal{X}^*(y-\mathcal{X}W^l(\gamma_k))+\widehat{\Delta}^l_k)-{\rm Prox}_{\chi_{\Omega_{\gamma_k}}}(W^l(\gamma_k)+\mathcal{X}^*(y-\mathcal{X}W^l(\gamma_k))_{\bar{I}^l(\gamma_k)}\|_F^2\\
				\le&\|(\widehat{\Delta}^l_k))_{\bar{I}^l(\gamma_k)}\|_F^2
				\le\big(\frac{\epsilon}{\sqrt{2\lvert \bar{I}^l(\gamma_k)\rvert}}\big)^2\lvert \bar{I}^l(\gamma_k)\rvert\\
				=& \epsilon^2/2,
			\end{split}
		\end{equation}
		where the first inequality follows from the fact that the proximal mapping is globally Lipschitz continuous with modulus $1$ and the second inequality is due to $\|\widehat{\Delta}^l_k\|_\infty\le \epsilon/\sqrt{2\lvert \bar{I}^l(\gamma_k)\rvert}$. Recall that $W^l(\gamma_k)$ is an optimal solution to the minimization problem
		\begin{equation}\label{prob:W_l}
			\underset{W\in\mathbb{R}^{d \times n}} {\rm min}\ \Big\{ \frac{1}{2} \|y-\mathcal{X}(W)\|^2+\chi_{\Omega_{\gamma_k}}(W)-\langle \Delta_k^{l},W\rangle \mid W_{\bar{I}^{l}(\gamma_k)}=0 \Big\},
		\end{equation}
		where $\Delta_k^{l}\in\mathbb{R}^{d \times n}$ is an error matrix such that $\|\Delta_k^{l}\|_F\leq\epsilon/\sqrt{2}$, $(\Delta_k^{l})_{\bar{I}^{l}(\gamma_k)}=0$. Denote $\Lambda\in \mathbb{R}^{d \times n}$ with $\Lambda_{I^{l}(\gamma_k)}=0$ by the Lagrange multiplier associated with the constraint of the above problem. Then the KKT conditions for Problem (\ref{prob:W_l}) are as follows
		\begin{equation*}
			\left\{ 
			\begin{array}{l}
				0\in (\mathcal{X}^*(\mathcal{X}W^l(\gamma_k)-y)+\partial \chi_{\Omega_{\gamma_k}}(W^l(\gamma_k))-\Delta_k^{l})_{I^{l}(\gamma_k)}, \\ 
				0\in (\mathcal{X}^*(\mathcal{X}W^l(\gamma_k)-y)+\partial \chi_{\Omega_{\gamma_k}}(W^l(\gamma_k))-\Delta_k^{l}-\Lambda)_{\bar{I}^{l}(\gamma_k)}, \\ 
				(W^l(\gamma_k))_{\bar{I}^{l}(\gamma_k)}=0,
			\end{array} \right.  	
		\end{equation*}
		which yields
		\begin{equation*}
			(W^l(\gamma_k))_{I^l(\gamma_k)} = ({\rm Prox}_{\chi_{\Omega_{\gamma_k}}}(W^l(\gamma_k)+\mathcal{X}^*(y-\mathcal{X}W^l(\gamma_k))+{\Delta}^l_k))_{I^l(\gamma_k)},
		\end{equation*}
		and hence
		\begin{equation*}\label{equa:R2}
			\begin{split}
				&\|(W^l(\gamma_k)- {\rm Prox}_{\chi_{\Omega_{\gamma_k}}}(W^l(\gamma_k)+\mathcal{X}^*(y-\mathcal{X}W^l(\gamma_k)))_{I^l(\gamma_k)}\|_F^2\\
				=&\|({\rm Prox}_{\chi_{\Omega_{\gamma_k}}}(W^l(\gamma_k)+\mathcal{X}^*(y-\mathcal{X}W^l(\gamma_k))+\Delta^l_k)-{\rm Prox}_{\chi_{\Omega_{\gamma_k}}}(W^l(\gamma_k)+\mathcal{X}^*(y-\mathcal{X}W^l(\gamma_k)))_{I^l(\gamma_k)}\|_F^2\\
				\le&\|({\Delta}^l_k))_{I^l(\gamma_k)}\|_F^2\\
				\le& \epsilon^2/2,
			\end{split}
		\end{equation*}
		which, combining with (\ref{equa:R1}), due to the definition of $R_{\gamma}$ given in (\ref{equa:R_gamma}), implies
		\begin{align*}
			&\|R_{\gamma_k}(W^l(\gamma_k))\|_F^2\\
			=& \|(W^l(\gamma_k)- {\rm Prox}_{\chi_{\Omega_{\gamma_k}}}(W^l(\gamma_k)+\mathcal{X}^*(y-\mathcal{X}W^l(\gamma_k)))_{\bar{I}^l(\gamma_k)}\|_F^2\\
			&+\|(W^l(\gamma_k)- {\rm Prox}_{\chi_{\Omega_{\gamma_k}}}(W^l(\gamma_k)+\mathcal{X}^*(y-\mathcal{X}W^l(\gamma_k)))_{I^l(\gamma_k)}\|_F^2\\
			\le& \epsilon^2/2+\epsilon^2/2 = \epsilon^2.
		\end{align*}
		Obviously, this is a contradiction to $\|R_{\gamma_k}(W^l(\gamma_k))\|_F>\epsilon$. Therefore, we conclude that for some $l\ge0$, $J^{l+1}(\gamma_k)\neq \emptyset$ as long as $\|R_{\gamma_k}(W^l(\gamma_k))\|_F> \epsilon$. Since the total number of components of $W$ is finite, Algorithm \ref{algor:AS} will terminate after a finite number of iterations. This completes the proof.
	\end{proof}

	\section{A semismooth Newton proximal augmented Lagrangian algorithm}\label{sec:Ssnpal}
	
	In this section, we shall employ the semismooth Newton proximal augmented Lagrangian algorithm to solve the minimization problems in Step 1 to Step 8 of Algorithm \ref{algor:AS}. Specifically, the proximal augmented Lagrangian (PAL) algorithm is proposed for solving the equivalent form of the original problem and the semismooth Newton ({\sc Ssn}) algorithm is applied to solve the subproblems of the PAL algorithm. 
	
	For any $\gamma\in\{\gamma_i,\ i=0,1,\ldots,p\}$, the original problems in Step 1 to Step 8 of Algorithm \ref{algor:AS} are as follows
	\begin{equation}\label{prob:P_As}
		\underset{W\in\mathbb{R}^{d \times n}} {\rm min}\ \Big\{ \frac{1}{2} \|y-\mathcal{X}(W)\|^2+\chi_{\Omega_{\gamma}}(W) \mid W\in \mathcal{S}_{\gamma} \Big\},
	\end{equation} 
	where $\mathcal{S}_{\gamma}:=\{W\in\mathbb{R}^{d \times n}\mid W_{ij}=0, (i,j)\in\bar{I}(\gamma)\}$. Note that if the PAL algorithm is directly applied to solve Problem (\ref{prob:P_As}), the resulting inner problem is nonsmooth composite minimization due to the nonsmoothness of indicator function $\chi_{\Omega_\gamma}$, which means that the semismooth Newton algorithm cannot solve the subproblem efficiently. Therefore, we introduce an auxiliary variable $Z\in\mathbb{R}^{d\times n}$ and reformulate Problem (\ref{prob:P_As}) as
	\begin{equation}\tag{${\rm}P_\gamma^\prime$}\label{prob:problemp'}
		\min_{W,Z} \Big\{\frac{1}{2} \|y-\mathcal{X}(W)\|^2+\chi_{\mathcal{S}_{\gamma}}(W)+\chi_{\Omega_\gamma}(Z) \mid W-Z=0\Big\}.
	\end{equation}
	
	The Lagrangian function of Problem (\ref{prob:problemp'}) is given by
	\begin{equation*}
		\ell(W,Z;U):=\frac{1}{2} \|y-\mathcal{X}(W)\|^2+\chi_{\mathcal{S}_{\gamma}}(W)+\chi_{\Omega_\gamma}(Z)+\langle U,W-Z\rangle.
	\end{equation*}
	Then the corresponding KKT optimality conditions for Problem (\ref{prob:problemp'}) are 
	\begin{equation}\label{equa:kkt_P'}
		0\in\mathcal{X}^*(\mathcal{X}(W)-y)+U+\partial\chi_{\mathcal{S}_{\gamma}}(W), \ \ U\in\partial \chi_{\Omega_\gamma}(Z), \ \ W-Z=0.
	\end{equation}
	Given $\sigma>0$, the augmented Lagrangian function of Problem (\ref{prob:problemp'}) is given by
	\begin{equation*}
		\mathcal{L}_\sigma(W,Z;U):=\ell(W,Z;U)+\frac{\sigma}{2}\|W-Z\|_F^2.
	\end{equation*}
	In the following, we apply the proximal augmented Lagrangian algorithm originally proposed in \cite{Rockafellar1976Augmented} to solve Problem (\ref{prob:problemp'}), whose main idea is to achieve the desired strong convexity of the involved function by adding a proximal term. Given $\sigma_k>0, \ U^k\in\mathbb{R}^{d\times n}$ and $W^k\in\mathbb{R}^{d\times n}$, we consider the proximal minimization subproblem as follows:
	\begin{equation}\label{prob:proxiprob}
		\min_{W\in\mathbb{R}^{d\times n}}  \Big\{\varPhi_k(W):=\inf_Z \mathcal{L}_{\sigma_k}(W,Z;U^k)+\frac{1}{2\sigma_k}\|W-W^k\|_F^2   \Big\},
	\end{equation}
	where $\varPhi_k(W)$ can be explicitly computed by
	\begin{equation*}
		\begin{split} 
			\varPhi_k(W)
			=& \frac{1}{2} \|y-\mathcal{X}(W)\|^2+\chi_{\mathcal{S}_{\gamma}}(W)-\frac{1}{2\sigma_k}\|U^k\|_F^2+\sigma_k\inf_Z \Big\{\chi_{\Omega_\gamma}(Z)+\frac{1}{2}\|Z-(W+U^k/\sigma_k)\|_F^2\Big\}\\
			&+\frac{1}{2\sigma_k}\|W-W^k\|_F^2.
		\end{split}
	\end{equation*}
	
	For the purpose of implementing the PAL algorithm efficiently, we denote some notations to reformulate Problem (\ref{prob:proxiprob}) into a small size minimization problem. For any $S\in \mathbb{R}^{d\times n}$, define a linear operator $\mathcal{Q}:\mathbb{R}^{d\times n}\rightarrow \mathbb{R}^t$ as follows: let $\bar{s}:=\mathcal{Q}(S)$ be the subvector of ${\rm Vec}(S)$ with the elements $S_{ij},\ (i,j)\in I(\gamma)$ being extracted, where $t=|I(\gamma)|$. The generalized inverse of $\mathcal{Q}$ is denoted by $\mathcal{Q}^\dagger:\mathbb{R}^t\rightarrow \mathbb{R}^{d\times n}$ such that $S=\mathcal{Q}^\dagger(\bar{s})$. Let $\bar{w}:=\mathcal{Q}(W)$, $\bar{z}:=\mathcal{Q}(Z)$, $\bar{u}:=\mathcal{Q}(U)$, and $\bar{b}:=\mathcal{Q}(T)$, where the matrix $T$ is defined in (\ref{equa:T}). Define an index set by
	\begin{equation*}
		\mathcal{J}(\gamma):=\{k\in\{1,2,\ldots,nd\}\mid k=(j-1)d+i, \ (i,j)\in I(\gamma)\}.
	\end{equation*}
	Then solving Problem (\ref{prob:proxiprob}) is equivalent to solving the following problem
	\begin{equation}\label{prob:phi}
		\begin{split} 
			\min_{\bar{w}\in\mathbb{R}^{t}} \big\{\varphi_k(\bar{w})
			:=& \frac{1}{2} \|y-X_{[m]\mathcal{J}(\gamma)}\bar{w}\|^2-\frac{1}{2\sigma_k}\|\bar{u}^k\|^2+\sigma_k\inf_{\bar{z}} \Big\{\chi_{\bar{\mathcal{C}}}(\bar{z})+\frac{1}{2}\|\bar{z}-(\bar{w}+\bar{u}^k/\sigma_k)\|^2\Big\}\\
			&+\frac{1}{2\sigma_k}\|\bar{w}-\bar{w}^k\|^2\big\},
		\end{split}
	\end{equation}
	where
	\begin{align*}
		\bar{\mathcal{C}}=\{w\in\mathbb{R}^{t} \mid \bar{B}w\le c\}
		\ \mbox{with} \
		\bar{B}=\left [
		\begin{array}{c}
			-F_{[nd]\mathcal{J}(\gamma)}\\ 
			D_{[nd]\mathcal{J}(\gamma)} \\
			\bar{b}^\top 
		\end{array} \right]
		\in \mathbb{R}^{(2nd+1) \times t},
	\end{align*}
	and $X\in \mathbb{R}^{m\times nd}$ is the matrix representation of $\mathcal{X}$ with respect to the standard basis of $\mathbb{R}^{d\times n}$ and $\mathbb{R}^{m}$. It is not difficult to see that $X$ is a diagonal block matrix with $X={\rm Diag}(X^1,X^2,\dots,X^n)$.

	\subsection{A proximal augmented Lagrangian algorithm}
	The framework of the proximal augmented Lagrangian algorithm for solving Problem (\ref{prob:problemp'}) is summarized as follows.
	
	\begin{algorithm}
		\caption{\small {\bf A proximal augmented Lagrangian algorithm for solving Problem (\ref{prob:problemp'})}} \label{algo:palm}
		\hspace*{0.01in} \raggedright {\bf Input:} 
		Choose an initial point $(W^0,Z^0,U^0)\in \mathbb{R}^{d\times n} \times \mathbb{R}^{d\times n}\times \mathbb{R}^{d\times n}$ and a parameter $\sigma_0>0$. \\
		\hspace*{0.01in}  \raggedright {\bf Output:} 
		An approximate optimal solution $(\hat{W},\hat{Z},\hat{U})$.\\
		\begin{algorithmic}[1]		
			\WHILE{Stopping criteria are not met}
			\STATE  Let $\bar{w}^0=\mathcal{Q}(W^0)$ and $\bar{u}^0=\mathcal{Q}(U^0)$. Iterate the following steps with $k=0,1,\ldots$
			\STATE  Approximately compute
			\begin{equation}\label{equa:wk1}
				\bar{w}^{k+1} \approx \underset{\bar{w}\in\mathbb{R}^{d\times n}}{\rm arg \ min} \big\{\varphi_k(\bar{w})\big\}
			\end{equation}
			to satisfy conditions (\ref{stopA1}) and (\ref{stopA2}), where $\varphi_k(\cdot)$ is defined in (\ref{prob:phi}). Extend $\bar{w}^{k+1}$ to $W^{k+1}$ by $W^{k+1}=\mathcal{Q}^\dagger(\bar{w}^{k+1})$.

			\STATE 
			Compute 
			\begin{equation*}
				Z^{k+1}=\Pi_{\Omega_\gamma}(W^{k+1}+U^k/\sigma_k), \ 
				\bar{z}^{k+1}=\mathcal{Q}(Z^{k+1}).
			\end{equation*}

			\STATE 
			Compute 
			\begin{equation*}
				U^{k+1} = U^k+\sigma_k(W^{k+1}-Z^{k+1}), \ 
				\bar{u}^{k+1}=\mathcal{Q}(U^{k+1}).
			\end{equation*}

			\STATE 
			Update $\sigma_{k+1} \uparrow \sigma_\infty\leq \infty$, $\hat{W}=W^{k+1}$, $\hat{Z}=Z^{k+1}$, and $\hat{U}=U^{k+1}$.
			
			\STATE
			$k:=k+1$.
			\ENDWHILE
		\end{algorithmic}
	\end{algorithm}
	In Algorithm \ref{algo:palm}, we obtain the approximate solution $\bar{w}^{k+1}$ to subproblem (\ref{equa:wk1}) with the following stopping criteria
	\begin{align}
		\|\nabla\varphi_k(\bar{w}^{k+1})\| & \le \frac{\varepsilon_k, \ }{\sigma_k},\ 0\le \varepsilon_k, \ \sum_{k=0}^{\infty} \varepsilon_k <\infty, \tag{A1}\label{stopA1}\\
		\|\nabla\varphi_k(\bar{w}^{k+1})\| & \le \frac{\vartheta_k}{\sigma_k}\|(\bar{w}^{k+1},\bar{z}^{k+1})-(\bar{w}^k,\bar{z}^k)\|+\|U^{k+1}-U^k\|_F,\  0\le \vartheta_k<1, \ \sum_{k=0}^{\infty} \vartheta_k <\infty. \tag{A2}\label{stopA2}
	\end{align} 
	
	Actually, the PAL algorithm has been studied in \cite[section 5]{Rockafellar1976Augmented}, we refer the reader to \cite{Rockafellar1976Augmented} for its detailed discussions. The attractive results on global and local convergence of Algorithm \ref{algo:palm} can be obtained directly from \cite{Luque1984Asymptotic,Rockafellar1976Augmented,Rockafellar1976Monotone}. Before we state them in the following theorem without any proof for brevity, we begin with analyzing the error bound condition for the involved function.
	
	Define the maximal monotone operator $\mathcal{T}_\ell$ as 
	\begin{equation*}
		\begin{split}
			\mathcal{T}_\ell(W,Z,U):=&\big\{(W^\prime,Z^\prime,U^\prime)\mid (W^\prime,Z^\prime,-U^\prime)\in\partial \ell(W,Z,U)\big\}\\
			=&\big\{(W^\prime,Z^\prime,U^\prime)\mid W^\prime\in\mathcal{X}^*(\mathcal{X}(W)-y)+\partial\chi_{\mathcal{S}_\gamma}(W)+U, \\
            &\ Z^\prime\in \partial \chi_{\Omega_\gamma}(Z)-U,\ U^\prime= W-Z\big\},
		\end{split}
	\end{equation*} 
	whose corresponding inverse operator is given by
	\begin{equation*}
		\begin{split}
			\mathcal{T}_\ell^{-1}(W^\prime,Z^\prime,U^\prime):=&\underset{W,Z,U}{\rm arg \  minimax}\big\{\ell(W,Z,U)-\langle W^\prime,W\rangle-\langle Z^\prime,Z\rangle+\langle U^\prime,U\rangle\big\}\\
			=&\big\{(W,Z,U)\mid (W^\prime,Z^\prime,-U^\prime)\in\partial \ell(W,Z,U)\big\}.
		\end{split}
	\end{equation*} 
	Since the set $\Omega_\gamma$ is polyhedral, $\partial \chi_{\Omega_\gamma}$ is a polyhedral multifunction (cf. \cite{Klatte2002Nonsmooth}), showing that $\mathcal{T}_\ell$ is also a polyhedral multifunction. It then follows from \cite[Proposition 12.30]{Rockafellar1998Variational} that $\mathcal{T}_\ell$ satisfies the error bound condition. Suppose that $r$ is a real number with $r>\sum_{k=0}^{\infty}\varepsilon_k$, where $\varepsilon_k$ is the summable sequence defined in (\ref{stopA1}). For given a positive modulus $\kappa>0$, if ${\rm dist}((W,Z,U),\mathcal{T}_\ell^{-1}(0))\le r$, then 
	\begin{equation*}
		{\rm dist}((W,Z,U),\mathcal{T}_\ell^{-1}(0))\le \kappa{\rm dist}(0,\mathcal{T}_\ell((W,Z,U)).
	\end{equation*} 
	
	Based on the above analysis, one can establish convergence results of Algorithm \ref{algo:palm} in the next theorem.
	
	\begin{theorem}
		(1) Suppose that ${(W^k,Z^k,U^k)}$ is the sequence generated by Algorithm \ref{algo:palm} with the stopping criterion \ref{stopA1}. Then the sequence ${(W^k,Z^k,U^k)}$ is bounded, $(W^k,Z^k)$ converges to an optimal solution Problem (\ref{prob:problemp'}), and $U^k$ converges to an optimal solution to the dual problem of Problem (\ref{prob:problemp'}).
		
		(2) Let ${(W^k,Z^k,U^k)}$ be the sequence generated by Algorithm \ref{algo:palm} with the stopping criteria (\ref{stopA1}) and (\ref{stopA2}). Then, for all $k>0$, it holds that 
		\begin{equation*}
			{\rm dist}((W^{k+1},Z^{k+1},U^{k+1}),\mathcal{T}_{\ell}^{-1}(0))\le \mu_k{\rm dist}((W^k,Z^k,U^k),\mathcal{T}_{\ell}^{-1}(0)),
		\end{equation*} 
		where 
		\begin{equation*}
			\mu_k=\frac{1}{1-\vartheta_k}\Big(\vartheta_k+\frac{\kappa(1+\vartheta_k)}{\sqrt{\sigma_k^2+\kappa^2}}\Big)\rightarrow \mu_{\infty}=\frac{\kappa}{\sqrt{\sigma_{\infty}^2+\kappa^2}}<1.
		\end{equation*} 
	\end{theorem}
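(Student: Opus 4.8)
The plan is to recognize Algorithm~\ref{algo:palm} as an inexact proximal point algorithm (PPA) applied to the maximal monotone operator $\mathcal{T}_\ell$ defined above, and then to invoke the general convergence theory of Rockafellar \cite{Rockafellar1976Augmented,Rockafellar1976Monotone} together with the local rate analysis of Luque \cite{Luque1984Asymptotic}. The first step is to make the PPA correspondence precise: minimizing $\varPhi_k$ over $W$ (equivalently $\varphi_k$ over $\bar{w}$), followed by the explicit updates for $Z^{k+1}$ and $U^{k+1}$, is exactly the proximal method of multipliers of \cite[Section~5]{Rockafellar1976Augmented}. One checks directly that the exact version of one outer iteration produces $(W^{k+1},Z^{k+1},U^{k+1})=(I+\sigma_k\mathcal{T}_\ell)^{-1}(W^k,Z^k,U^k)$, i.e.\ a single resolvent step of $\mathcal{T}_\ell$ with proximal parameter $\sigma_k$. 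Since $\mathcal{T}_\ell^{-1}(0)$ coincides with the KKT points of Problem~(\ref{prob:problemp'}) and the feasible set is polyhedral with a finite infimum, $\mathcal{T}_\ell^{-1}(0)\neq\emptyset$, so the abstract theory applies.

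The second step, and the main technical obstacle, is to show that the implementable stopping criteria (\ref{stopA1}) and (\ref{stopA2}), phrased in terms of $\|\nabla\varphi_k(\bar{w}^{k+1})\|$, imply Rockafellar's abstract inexactness criteria for the resolvent step, namely
\begin{align*}
\|(W^{k+1},Z^{k+1},U^{k+1})-(I+\sigma_k\mathcal{T}_\ell)^{-1}(W^k,Z^k,U^k)\| &\le \varepsilon_k, \qquad \textstyle\sum_k\varepsilon_k<\infty,\\
\|(W^{k+1},Z^{k+1},U^{k+1})-(I+\sigma_k\mathcal{T}_\ell)^{-1}(W^k,Z^k,U^k)\| &\le \delta_k\,\|(W^{k+1},Z^{k+1},U^{k+1})-(W^k,Z^k,U^k)\|,
\end{align*}
with $\sum_k\delta_k<\infty$. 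The key ingredient is that $\varphi_k$ is strongly convex with modulus at least $1/\sigma_k$, owing to the proximal term $\frac{1}{2\sigma_k}\|\bar{w}-\bar{w}^k\|^2$; hence if $\bar{w}^{k+1}_*$ denotes the exact minimizer, then $\|\bar{w}^{k+1}-\bar{w}^{k+1}_*\|\le \sigma_k\|\nabla\varphi_k(\bar{w}^{k+1})\|$. Because $Z^{k+1}$ and $U^{k+1}$ are obtained from $W^{k+1}$ through the globally $1$-Lipschitz projection $\Pi_{\Omega_\gamma}$ and the affine multiplier update, the full primal-dual distance to the exact resolvent is controlled by a fixed multiple of $\|W^{k+1}-W^{k+1}_*\|$. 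Substituting (\ref{stopA1}) then yields the first display with $\varepsilon_k$ summable, while rearranging (\ref{stopA2}) yields the second with $\delta_k=\vartheta_k$.

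With the PPA interpretation and the translated criteria in hand, part~(1) follows from Rockafellar's global convergence theorem \cite[Theorem~1]{Rockafellar1976Monotone}: under the summable-error criterion the sequence $(W^k,Z^k,U^k)$ is bounded and converges to a point of $\mathcal{T}_\ell^{-1}(0)$, whose primal part $(W^k,Z^k)$ is optimal for (\ref{prob:problemp'}) and whose dual part $U^k$ is optimal for the corresponding dual. For part~(2), I would invoke the error bound condition for $\mathcal{T}_\ell$ already established before the theorem (valid with modulus $\kappa$ because $\mathcal{T}_\ell$ is a polyhedral multifunction), which is precisely the Lipschitzian-at-$0$ property of $\mathcal{T}_\ell^{-1}$ required by \cite[Theorem~2]{Rockafellar1976Monotone} and \cite{Luque1984Asymptotic}. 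Their local rate estimate then gives, for $k$ large enough that the iterates lie in the error-bound neighborhood,
\begin{equation*}
{\rm dist}\big((W^{k+1},Z^{k+1},U^{k+1}),\mathcal{T}_\ell^{-1}(0)\big)\le \mu_k\,{\rm dist}\big((W^k,Z^k,U^k),\mathcal{T}_\ell^{-1}(0)\big),
\end{equation*}
with $\mu_k=\frac{1}{1-\vartheta_k}\big(\vartheta_k+\kappa(1+\vartheta_k)/\sqrt{\sigma_k^2+\kappa^2}\big)$, exactly the expression in the statement. Finally, letting $k\to\infty$ and using $\vartheta_k\to 0$ and $\sigma_k\uparrow\sigma_\infty$ gives $\mu_k\to\kappa/\sqrt{\sigma_\infty^2+\kappa^2}<1$, completing the rate claim.
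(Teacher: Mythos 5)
Your proposal follows exactly the route the paper intends: the paper states this theorem without proof, citing only that it ``can be obtained directly from'' Rockafellar's proximal method of multipliers \cite{Rockafellar1976Augmented,Rockafellar1976Monotone} and Luque's rate analysis \cite{Luque1984Asymptotic}, after establishing the error bound for the polyhedral multifunction $\mathcal{T}_\ell$ --- precisely the PPA identification, criterion translation via strong convexity of $\varphi_k$, and error-bound-based rate argument you outline. Your reconstruction is correct and in fact supplies more detail (notably the translation of (A1)--(A2) into the abstract inexactness criteria) than the paper itself records.
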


	\subsection{Semismooth Newton algorithm for solving (\ref{equa:wk1})}
	For any given $(\tilde{w},\tilde{u})\in \mathbb{R}^t\times \mathbb{R}^t$ and $\sigma>0$, we apply the semismooth Newton algorithm to solve Problem (\ref{equa:wk1}), in which $\varphi(\tilde{w})$ can be further manipulated as
	\begin{equation*}
		\varphi(\bar{w}) 
		= \frac{1}{2} \|y-X_{[m]\mathcal{J}(\gamma)}\bar{w}\|^2-\frac{1}{2\sigma}\|\tilde{u}\|^2+\sigma\mathcal{H}_{\chi_{\bar{\mathcal{C}}}}(\bar{w}+\tilde{u}/\sigma)+\frac{1}{2\sigma}\|\bar{w}-\tilde{w}\|^2,
	\end{equation*}
	By the convexity and differentiability of Moreau envelope $\mathcal{H}_{\chi_{\bar{\mathcal{C}}}}(\cdot)$, one concludes that $\varphi(\cdot)$ is strongly convex and continuously differentiable. Hence, the unique optimal solution to (\ref{equa:wk1}) can be obtained by solving nonsmooth equations:
	\begin{equation}\label{equa:nonsmoequ}
		\begin{split}
			0= \nabla\varphi(\bar{w})
			=& X_{[m]\mathcal{J}(\gamma)}^\top(X_{[m]\mathcal{J}(\gamma)}\bar{w}-y)+\sigma \bar{w}+\tilde{u}-\sigma\Pi_{\bar{\mathcal{C}}} (\bar{w}+\tilde{u}/\sigma)+(\bar{w}-\tilde{w})/\sigma\\
			=& X_{[m]\mathcal{J}(\gamma)}^\top(X_{[m]\mathcal{J}(\gamma)}\bar{w}-y)+(\sigma+\frac{1}{\sigma})\bar{w}+\tilde{u}-\sigma\Pi_{\bar{\mathcal{C}}} (\bar{w}+\tilde{u}/\sigma)-\tilde{w}/\sigma.
		\end{split}
	\end{equation}
	Since $\Pi_{\bar{\mathcal{C}}}(\cdot)$ is piecewise affine, it follows from \cite{Facchinei2003Finite} that $\Pi_{\bar{\mathcal{C}}}(\cdot)$ is strongly semismooth. Thus, $\nabla\varphi(\cdot)$ is strongly semismooth, which allows us to develop a semismooth Newton algorithm for nonsmooth equations (\ref{equa:nonsmoequ}) and expect to obtain a fast superlinear or even quadratic convergence rate. When the semismooth Newton algorithm is derived, one needs to characterize the Clarke subdifferential \cite[Definition 2.6.1]{Clarke1983Optimization} of $\nabla\varphi$, that is the generalized Hessian for $\varphi$ denoted by $\partial^2\varphi$. For any $N\in {\cal N}_{\mathcal{C}}({\rm Vec}(\mathcal{Q}^\dagger(\bar{w}+\tilde{u}/\sigma)))$, let $\bar{N}:=N_{\mathcal{J}(\gamma)}\in \mathbb{R}^{t\times t}$, where ${\cal N}_{\mathcal{C}}$ is given in (\ref{equa:hs_jacobi}). By the definition of the generalized HS-Jacobian, we obtain that $\bar{N}$ is an arbitrary element of the generalized HS-Jacobian of $\Pi_{\bar{\mathcal{C}}}$ at $\bar{w}+\tilde{u}/\sigma$, denoted by $\bar{{\cal N}}_{\bar{\mathcal{C}}}(\bar{w}+\tilde{u}/\sigma)$. Since $\nabla\varphi(\cdot)$ is Lipschitz continuous, the multifunction as a surrogate generalized Hessian for $\varphi$ is well defined by
	\begin{equation}\label{Cal_M}
		\mathcal{M}(\bar{w}): = \{ M\in \mathbb{R}^{t\times t}\mid M:=X_{[m]\mathcal{J}(\gamma)}^\top X_{[m]\mathcal{J}(\gamma)}+\sigma(I_{t}-\bar{N})+\frac{1}{\sigma} I_{t}, \bar{N}\in \bar{{\cal N}}_{\bar{\mathcal{C}}}(\bar{w}+\tilde{u}/\sigma) \}, \forall \bar{w}\in \mathbb{R}^{t}.
	\end{equation}
	Note that for any $M\in \mathcal{M}(\bar{w})$, there exists $\bar{N}\in \bar{{\cal N}}_{\bar{\mathcal{C}}}(\bar{w}+\tilde{u}/\sigma)$ such that $M=X_{[m]\mathcal{J}(\gamma)}^\top X_{[m]\mathcal{J}(\gamma)}+\sigma(I_{t}-\bar{N})+\frac{1}{\sigma} I_{t}$. Since $I_{t}-\bar{N}$ is symmetric and positive semidefinite, it follows that $M$ is a symmetric and positive definite matrix. 
	
	With the above discussion, we intend to apply the semismooth Newton ({\sc Ssn}) algorithm to solve the inner problem (\ref{equa:wk1}) in Algorithm \ref{algo:palm}. The corresponding algorithmic framework and convergence analysis are presented as follows.
	
	\begin{algorithm}	
		\caption{\small {\bf A semismooth Newton algorithm for solving (\ref{equa:nonsmoequ})}} \label{Ssn}
		\hspace*{0.01in} \raggedright {\bf Input:} $\bar{w}^0 \in \mathbb{R}^t$, $\varrho \in (0, 1/2)$, $\nu \in (0, 1)$, $\tau \in (0,1]$, $\varpi \in (0, 1)$. Iterate the following steps for $j=0,1,\ldots$.\\
		
		\begin{algorithmic}[1]
			\STATE  Choose an element $M_j \in \mathcal{M}(\bar{w}^j)$. Apply the conjugate gradient (CG)
			method to find an approximate solution $h^j\in\mathbb{R}^t$ to 
			\begin{equation}\label{linsys}
				M_j h=-\nabla \varphi(\bar{w}^j)
			\end{equation}
			such that 
			\begin{equation}\label{cgstop}
				\| M_j h^j  + \nabla \varphi(\bar{w}^j)\|\leq\min\{\nu, \| \nabla \varphi(\bar{w}^j)\|^{1+\tau}\}.
			\end{equation}
			
			\STATE  Set $\alpha_j = \varpi^{\iota_j}$, where $\iota_j$ is the smallest nonnegative integer $\iota$ such that 
			\begin{equation*}
				\varphi(\bar{w}^j + \varpi^{\iota} h^j)\leq \varphi(\bar{w}^j)+ \varrho \varpi^{\iota}\langle \nabla \varphi(\bar{w}^j), h^j \rangle.
			\end{equation*}
			
			\STATE Update $\bar{w}^{j+1} = \bar{w}^j + \alpha_jh^j$, $j\leftarrow j+1$, and go to step 1.
		\end{algorithmic}
	\end{algorithm}	
	
	\begin{theorem}
		Assume that $\{\bar{w}^j\}$ is the infinite sequence generated by Algorithm \ref{Ssn}. Then, $\{\bar{w}^j\}$ converges globally to the unique optimal solution $\hat{w}$ of Problem (\ref{equa:wk1}) and the rate of convergence is at least superlinear with
		\begin{equation*}
			\|\bar{w} ^{j+1}-\hat{w}\|=O(\|\bar{w} ^{j}-\hat{w}\|^{1+\tau}),
		\end{equation*}
		where $\tau$ is the parameter given in Algorithm \ref{Ssn}.
	\end{theorem}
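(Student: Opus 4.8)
The plan is to establish this as a standard convergence result for a globalized semismooth Newton method applied to the strongly semismooth equation $\nabla\varphi(\bar w)=0$, exploiting the structural properties already assembled in the excerpt: $\varphi$ is strongly convex and continuously differentiable, $\nabla\varphi$ is strongly semismooth (since $\Pi_{\bar{\mathcal C}}$ is piecewise affine hence strongly semismooth), and every $M\in\mathcal M(\bar w)$ is symmetric positive definite with a uniform positive lower bound on its smallest eigenvalue (coming from the $\tfrac1\sigma I_t$ term). I would invoke the general framework of \cite{Facchinei2003Finite} (or the standard {\sc Ssn} convergence theorems used in the companion works \cite{Li2018A,Lin2024An}) and verify its hypotheses in two phases: global convergence via the line search, then the local superlinear rate.

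First I would prove global convergence. Because each $M_j\in\mathcal M(\bar w^j)$ is positive definite, the CG output $h^j$ satisfying the residual bound (\ref{cgstop}) is a descent direction; more precisely, using $\langle\nabla\varphi(\bar w^j),h^j\rangle=-\langle h^j,M_jh^j\rangle+\langle h^j,r^j\rangle$ with small residual $r^j$, together with the uniform positive definiteness $M_j\succeq\tfrac1\sigma I_t$, one gets $\langle\nabla\varphi(\bar w^j),h^j\rangle\le -c\|\nabla\varphi(\bar w^j)\|^2$ for some $c>0$. Hence the Armijo line search in Step 2 is well defined and produces a strictly decreasing sequence $\{\varphi(\bar w^j)\}$. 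Since $\varphi$ is strongly convex, its level sets are compact, so $\{\bar w^j\}$ is bounded and every accumulation point is a stationary point; strong convexity forces the stationary point to be the unique minimizer $\hat w$, giving $\bar w^j\to\hat w$.

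Next I would establish the local superlinear rate. Once $\bar w^j$ is close to $\hat w$, I would show that the full Newton step $\alpha_j=1$ is eventually accepted, which is the crux of transitioning from the globalized iteration to the pure Newton iteration. This uses strong semismoothness of $\nabla\varphi$ together with the boundedness of $\{M_j^{-1}\}$ (guaranteed by $M_j\succeq\tfrac1\sigma I_t$) to show that the Newton direction satisfies the Armijo condition with unit step for all large $j$; this is the standard argument in \cite[Theorem 3.6]{Facchinei2003Finite}. For the rate itself, writing the semismooth Newton error recursion and using the residual control $\|M_jh^j+\nabla\varphi(\bar w^j)\|\le\|\nabla\varphi(\bar w^j)\|^{1+\tau}$ from (\ref{cgstop}), strong semismoothness yields $\|\nabla\varphi(\bar w^j)-M_j(\bar w^j-\hat w)\|=O(\|\bar w^j-\hat w\|^{1+\tau})$ (the exponent $1+\tau$ reflecting strong semismoothness with $\tau\le 1$), and combining these with uniform invertibility of $M_j$ gives $\|\bar w^{j+1}-\hat w\|=O(\|\bar w^j-\hat w\|^{1+\tau})$.

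The main obstacle I anticipate is not the abstract convergence machinery but the verification of the uniform nonsingularity of the elements $M_j\in\mathcal M(\bar w^j)$ along the sequence, since the superlinear rate and the acceptance of unit steps both hinge on $\{M_j^{-1}\}$ being uniformly bounded near $\hat w$. Here I would lean on the explicit structure established earlier: each $\bar N\in\bar{\mathcal N}_{\bar{\mathcal C}}$ is symmetric positive semidefinite with $I_t-\bar N\succeq 0$, so $M_j=X_{[m]\mathcal J(\gamma)}^\top X_{[m]\mathcal J(\gamma)}+\sigma(I_t-\bar N)+\tfrac1\sigma I_t\succeq\tfrac1\sigma I_t$ uniformly, which makes the inverses bounded without any further regularity assumption. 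This is precisely why the proximal term was introduced, and it lets me conclude the result under the stated mild conditions by directly citing the semismooth Newton convergence theorem of \cite{Facchinei2003Finite}.
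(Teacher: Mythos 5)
Your proposal follows essentially the same route as the paper's proof: descent directions plus the Armijo line search and boundedness of the iterates (the paper gets boundedness from coercivity of $\varphi$ via \cite{Rockafellar1974Conjugate}, you from compactness of the level sets of the strongly convex $\varphi$ --- equivalent here) give global convergence to the unique minimizer $\hat{w}$; then the uniform bounds $\frac{1}{\sigma}I_t\preceq M_j\preceq (\|X_{[m]\mathcal{J}(\gamma)}\|^2+\sigma+\frac{1}{\sigma})I_t$, strong semismoothness of $\nabla\varphi$, the residual control (\ref{cgstop}), and eventual acceptance of the unit step give the $O(\|\bar{w}^j-\hat{w}\|^{1+\tau})$ rate. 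The paper cites \cite[Proposition 3.3, Theorem 3.5]{Zhao2010A} and \cite{Facchinei1995Minimization} where you cite \cite{Facchinei2003Finite}, but the content is the same.

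One step as written does not quite close: deriving $\langle\nabla\varphi(\bar{w}^j),h^j\rangle\le -c\|\nabla\varphi(\bar{w}^j)\|^2$ from the identity $\langle\nabla\varphi(\bar{w}^j),h^j\rangle=-\langle h^j,M_jh^j\rangle+\langle h^j,r^j\rangle$ together with (\ref{cgstop}) alone. The tolerance $\min\{\nu,\|\nabla\varphi(\bar{w}^j)\|^{1+\tau}\}$ is not guaranteed to be small \emph{relative} to $\|\nabla\varphi(\bar{w}^j)\|$ when the gradient has moderate norm (e.g.\ $\|\nabla\varphi(\bar{w}^j)\|\approx 1$ allows $\|r^j\|$ up to $\nu$), so for an ill-conditioned $M_j$ an adversarial inexact solve satisfying (\ref{cgstop}) need not be a descent direction. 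What rescues the argument --- and what \cite[Proposition 3.3]{Zhao2010A} actually exploits --- is that $h^j$ is produced by CG initialized at zero: since $h^j$ improves the quadratic model over $h=0$, one has $\frac{1}{2}\langle h^j,M_jh^j\rangle+\langle\nabla\varphi(\bar{w}^j),h^j\rangle\le 0$, hence $\langle\nabla\varphi(\bar{w}^j),h^j\rangle\le-\frac{1}{2}\langle h^j,M_jh^j\rangle\le-\frac{1}{2\sigma}\|h^j\|^2$, which is precisely the bound $-\nabla\varphi(\bar{w}^j)^\top h^j\ge\hat{\rho}\|h^j\|^2$ the paper needs both for the global phase and for the unit-step acceptance. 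With that substitution your argument is complete.
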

	\begin{proof}
		From \cite[ Proposition 3.3]{Zhao2010A}, we know that for any $j\geq0$, $h^j$ is always a descent direction as long as $\varphi(\bar{w}^j)\neq0$ and Algorithm \ref{Ssn} is well defined. Since the coercive property of $\varphi$ holds (cf. \cite{Rockafellar1974Conjugate}), the infinite sequence $\{\bar{w}^j\}$ is bounded. Then, by making use of the standard convergence analysis for Newton's method in \cite[Theorem 6.3.3]{Dennis1983Numerical}, we derive that
		\begin{equation*}
			\lim\limits_{j\rightarrow\infty} \nabla\varphi(\bar{w}^j)=0,
		\end{equation*}
		which, combined with the strong convexity of $\varphi(\cdot)$ and the boundedness of $\{\bar{w}^j\}$, yields that $\bar{w}^j\rightarrow\hat{w}$ and $\hat{w}$ is the unique optimal solution to (\ref{equa:wk1}). 
		
		Recall that any element $M\in \mathcal{M}(\bar{w})$ is positive definite, hence for all $j$ sufficiently large, $\|M_j\|_F^{-1}$ is uniformly bounded and the conjugate gradient method can find $h^j=-\nabla\varphi(\bar{w}^j)/M_j$ with the condition (\ref{cgstop}). Since $\Pi_{\Omega}(\cdot)$ is strongly semismooth, we follow the proofs of \cite[Theorem 3.5]{Zhao2010A} to conclude that for any $j$ sufficiently large,
		\begin{equation}\label{uequ}
			\|\bar{w}^{j}+h^j-\hat{w}\|= O(\|\bar{w}^{j}-\hat{w}\|^{1+\tau}) 
		\end{equation}
		and $-\nabla\varphi(\bar{w}^j)^\top h^j\geq\hat{\rho}\|h^j\|^2$ for some constant $\hat{\rho}>0$. Besides, one knows that, by \cite[Theorem 3.3 and Remark 3.4]{Facchinei1995Minimization}, there exists an integer $j_0$ such that for any $j\ge j_0$,
		\begin{equation*}
			\varphi(\bar{w}^j + h^j)\leq \varphi(\bar{w}^j)+ \varrho\langle \nabla \varphi(\bar{w}^j), h^j \rangle,
		\end{equation*}
		which implies that $\bar{w}^{j+1} = \bar{w}^j+h^j$ for all $j\ge j_0$. Thus, combining this with (\ref{uequ}) leads to the desired result.
	\end{proof}

	\section{Numerical Experiments}\label{sec:experiments}
	In this section, we implement the AS-{\sc Ssnpal} algorithm to solve multi-task Lasso problems on synthetic and real data sets. To illustrate the performance of the AS-{\sc Ssnpal} algorithm, we compare it to some other algorithms, including the semismooth Newton proximal augmented Lagrangian algorithm ({\sc Ssnpal}), the alternating direction method of multipliers (ADMM), and the AS strategy with the ADMM algorithm (AS-ADMM). All numerical experiments are conducted in MATLAB R2020b on an HP desktop computer with Intel(R) Core(TM) i5-10500 CPU @ 3.10 GHz and 8 GB RAM.
	
	\subsection{Some other algorithms}
	In this subsection, we shall introduce the {\sc Ssnpal} algorithm, the ADMM algorithm, and the AS strategy with the ADMM algorithm in solving the multi-task Lasso problem.
	
	The first algorithm is the {\sc Ssnpal} algorithm, which is similar to our proposed algorithm, but omits the use of the adaptive sieving reduction strategy. The {\sc Ssnpal} algorithm considers the following equivalent form of Problem (\ref{mtlproblem1}):
	\begin{equation}\label{prob:Ssnpal}
		\underset{W,Z} {\rm min}\ \Big\{ \frac{1}{2} \|y-\mathcal{X}(W)\|^2+\chi_{\Omega_{\gamma}}(Z) \mid W-Z=0\Big\}.
	\end{equation}
	Given $\sigma>0$, the augmented Lagrangian function for Problem (\ref{prob:Ssnpal}) is given by
	\begin{equation*}
		\widetilde{\mathcal{L}}_\sigma(W,Z;U):=\frac{1}{2} \|y-\mathcal{X}(W)\|^2+\chi_{\Omega_{\gamma}}(Z)+\langle U,W-Z\rangle+\frac{\sigma}{2}\|W-Z\|_F^2.
	\end{equation*}
	When the {\sc Ssnpal} algorithm is applied to Problem (\ref{prob:Ssnpal}), the subproblem at the $k-$th iteration is to solve: 
	\begin{equation*}
		\underset{W}{\rm min} \ \Big\{ \Psi_k(W):=\inf_Z \widetilde{\mathcal{L}}_{\sigma_k}(W,Z;U^k)+\frac{1}{2\sigma_k}\|
		W-\widetilde{W}^k\|^2_F\Big\}
	\end{equation*}
	for a given $\widetilde{W}^k\in\mathbb{R}^{d\times n}$. Note that $\Psi_k(W)$ can be further manipulated as
	\begin{equation*}
		\begin{split} 
			\Psi_k(W)
			=& \frac{1}{2} \|y-\mathcal{X}(W)\|^2-\frac{1}{2\sigma_k}\|U^k\|_F^2+ \sigma_k\inf_Z \Big\{\chi_{\Omega_{\gamma}}(Z)/\sigma_k+\frac{1}{2}\|Z-(W+U^k/\sigma_k)\|_F^2\Big\} \\
            &+\frac{1}{2\sigma_k}\|W-\widetilde{W}^k\|_F^2\\
			=& \frac{1}{2} \|y-\mathcal{X}(W)\|^2-\frac{1}{2\sigma_k}\|U^k\|_F^2+\sigma_k\mathcal{H}_{\chi_{\Omega_{\gamma}}}(W+U^k/\sigma_k)+\frac{1}{2\sigma_k}\|W-\widetilde{W}^k\|_F^2.
		\end{split}
	\end{equation*}
	Thus, we obtain 
	\begin{equation*}
		W^{k+1}\in \underset{W\in\mathbb{R}^{d\times n}}{\rm arg \ min} \{\Psi_k(W)\} \ {\rm and}  \ Z^{k+1}=\Pi_{\Omega_\gamma}(W^{k+1}+U^k/\sigma_k).
	\end{equation*}
	Since $\Psi_k$ is strongly convex and continuously differentiable, $W^{k+1}$ can be achieved by solving the following nonsmooth equations:
	\begin{equation*}
		\begin{split}
			0=\nabla\Psi(W)
			=&\mathcal{X}^*(\mathcal{X}(W)-y)+(\sigma_k +\frac{1}{\sigma_k}) W+U^k-\sigma_k \Pi_{\Omega_\gamma}(W+U^k/\sigma_k)-\widetilde{W}^k/\sigma_k.
		\end{split}
	\end{equation*} 
	Then, for given $W\in\mathbb{R}^{d\times n}$, we can replace the Clarke generalized Jacobian for $\nabla\Psi_k$ with the multifunction $\widetilde{\mathcal{M}}:\mathbb{R}^{d\times n}\rightrightarrows \mathbb{R}^{nd\times nd}$ defined by
	\begin{align*}
		\widetilde{\mathcal{M}}(W)(H):= \{\mathcal{X}^*\mathcal{X}(H)+(\sigma_k+\frac{1}{\sigma_k})H-\sigma_k {\rm Mat}(\widetilde{N}{\rm Vec}(H))\mid
		\widetilde{N} \in {\cal N}_{\mathcal{C}}({\rm Vec}(W+U^k/\sigma_k)) \}
	\end{align*}
	for any $H\in\mathbb{R}^{d\times n}$, where ${\cal N}_{\mathcal{C}}$ is defined in (\ref{equa:hs_jacobi}). We then outline the detailed steps of the {\sc Ssnpal} algorithm for solving (\ref{prob:Ssnpal}) in Algorithm \ref{algo:Ssnpal}.

	\begin{algorithm}
		\caption{\small {\bf A semismooth Newton proximal augmented Lagrangian algorithm for (\ref{prob:Ssnpal})}}\label{algo:Ssnpal}
		\hspace*{0.01in} \raggedright {\bf Input:} Choose an initial point $(W^0,Z^0,U^0)\in\mathbb{R}^{d\times n} \times \mathbb{R}^{d\times n} \times \mathbb{R}^{d\times n}$. Given parameter $\sigma_0>0$, $\varrho \in (0, 1/2)$, $\nu \in (0, 1)$, $\tau \in (0,1]$, and $\varpi \in (0, 1)$. Set $k=0$. \\
		\hspace*{0.01in} \raggedright {\bf Output:} An approximate optimal solution $(\hat{W},\hat{Z},\hat{U})$. \\
		\begin{algorithmic}[1]
			
			\WHILE{Stopping criteria are not met}
			
            \STATE Specify an initial value $W_0=W^k$. Iterate the following steps with $j=0,1,\ldots$.
			\WHILE{Stopping criteria for the inner loop are not met}
			
            \STATE Choose $\widetilde{N} \in {\cal N}_{\mathcal{C}}({\rm Vec}(W_j+U^k/\sigma_k)) \}$. For $H\in\mathbb{R}^{d\times n}$, let $\widetilde{M}_jH:=\mathcal{X}^*\mathcal{X}(H)+\sigma_k(H-{\rm Mat}(\widetilde{N}{\rm Vec}(H)))+H/{\sigma_k} $. Apply the direct method or the conjugate gradient algorithm to solve equations
			\begin{equation*}
				\widetilde{M}_jH+\nabla\Psi(W_j)=0
			\end{equation*}
			such that
			\begin{equation*}
				\|\widetilde{M}_j H^j  + \nabla\Psi(W_j)\|\leq\min\{\nu, \| \nabla\Psi(W_j)\|^{1+\tau}\}.
			\end{equation*}
			
            \STATE Set $\alpha_j = \varpi^{\iota_j}$, where $\iota_j$ is the smallest nonnegative integer $\iota$ such that 
			\begin{equation*}
				\Psi(W_j + \varpi^{\iota} H^j)\leq \Psi(W_j)+ \varrho \varpi^{\iota}\langle \nabla \Psi(W_j), H^j \rangle.
			\end{equation*}
			
            \STATE Update $W_{j+1} = W_j + \alpha_jH^j$ and $W^{k+1}=W_{j+1}$.
			\STATE $j++$.
			\ENDWHILE
			
            \STATE Compute
			\begin{equation*}
				Z^{k+1}=\Pi_{\Omega_\gamma}( W^{k+1}+Z^k/\sigma_k), \ U^{k+1}=U^k+\sigma_k (W^{k+1}-Z^{k+1}).
			\end{equation*}
			
            \STATE Update $\sigma_{k+1} \uparrow \sigma_\infty\leq \infty$, $\hat{W}=W^{k+1}$, $\hat{Z}=Z^{k+1}$, and $\hat{U}=U^{k+1}$.
			\STATE $k:=k+1$.
			\ENDWHILE
		\end{algorithmic}
	\end{algorithm}
	Next, we introduce the ADMM algorithm to solve Problem (\ref{prob:Ssnpal}). The general steps of the ADMM algorithm are as follows:
	\begin{align*}
		W^{k+1}&\approx \underset{W}{\rm arg \ min} \ \{\widetilde{\mathcal{L}}_{\sigma}(W,Z^k;U^k)\},\\
		Z^{k+1}&= \underset{Z}{\rm arg \ min} \ \{\widetilde{\mathcal{L}}_{\sigma}(W^{k+1},Z;U^k)\},\\
		U^{k+1} &= U^k+\kappa\sigma ( W^{k+1}-Z^{k+1}),
	\end{align*}
	where $\kappa>0$ is a given step length. By the continuous differentiability of $\widetilde{\mathcal{L}}_{\sigma}$, one solves the following linear system to obtain $W^{k+1}$:
	\begin{equation*}
		(\mathcal{X}^*\mathcal{X}+\sigma I_d)W=\mathcal{X}^*(y)+\sigma(Z^k-U^k/\sigma).
	\end{equation*}
	Here, the CG algorithm is considered since $\mathcal{X}^*\mathcal{X}$ is a positive semidefinite operator. The framework of the ADMM algorithm is summarized in Algorithm \ref{algo:ADMM}.
	\begin{algorithm}
		\caption{\small {\bf An alternating direction method of multipliers for (\ref{prob:Ssnpal}) }} \label{algo:ADMM}
		\hspace*{0.01in}\raggedright {\bf Input:} $\kappa \in (0, (1+\sqrt{5})/2)$, $\sigma >0$, $(W^0,Z^0,U^0)\in\mathbb{R}^{d\times n} \times \mathbb{R}^{d\times n} \times \mathbb{R}^{d\times n} $. Set $k=0$.  \\
		\begin{algorithmic}[1]
			\STATE  Apply the CG algorithm to obtain $W^{k+1}$ such that
			\begin{equation*}
				W^{k+1}\approx \underset{W}{\rm arg \min} \ \{\widetilde{\mathcal{L}}_{\sigma}(W,Z^k;U^k)\}.
			\end{equation*}
			
			\STATE  Compute
			$
			Z^{k+1}=\Pi_{\Omega_\gamma}(W^{k+1}+Z^k/\sigma).
			$

			\STATE Compute $U^{k+1} = U^k+\kappa\sigma ( W^{k+1}-Z^{k+1}).$	
			\STATE Set $k \leftarrow k+1$, and go to step 1.
		\end{algorithmic}
	\end{algorithm}

	We also test the AS strategy with the ADMM algorithm and denote the resulting method by AS-ADMM. Given $\sigma>0$, we define
	\begin{equation*}
		\psi(\bar{w},\bar{z},\bar{u})=\frac{1}{2} \|y-X_{[m]\mathcal{J}(\gamma)}\bar{w}\|^2-\frac{1}{2\sigma}\|\bar{u}\|^2+\chi_{\bar{\mathcal{C}}}(\bar{z})+\frac{\sigma}{2}\|\bar{z}-(\bar{w}+\bar{u}/\sigma)\|^2.
	\end{equation*}
	Similar to discussions in Section \ref{sec:Ssnpal}, an optimal solution to Problem (\ref{prob:problemp'}) can be obtained by solving 
	\begin{equation*}
		\underset{\bar{w},\bar{z},\bar{u}}{\min} \ \{\psi(\bar{w},\bar{z},\bar{u})\}.
	\end{equation*}
	
	The detailed steps of the ADMM algorithm on Problem (\ref{prob:problemp'}) are described in Algorithm \ref{algo:AS-ADMM}.
	
	\begin{algorithm}[H]
		\caption{\small {\bf An alternating direction method of multipliers for (\ref{prob:problemp'}) }} \label{algo:AS-ADMM}
		\hspace*{0.01in} \raggedright {\bf Input:} $\kappa \in (0, (1+\sqrt{5})/2)$, $\sigma >0$,  $(W^0,Z^0,U^0)\in \mathbb{R}^{d\times n}\times \mathbb{R}^{d\times n}$. Set $k=0$.  \\
		\hspace*{0.01in} \raggedright {\bf Output:} An approximate optimal solution $(\hat{W},\hat{Z},\hat{U})$.\\
		\begin{algorithmic}[1]		
			\WHILE{Stopping criteria are not met}
			\STATE  Let $\bar{w}^0=\mathcal{Q}(W^0)$ and $\bar{u}^0=\mathcal{Q}(U^0)$. Iterate the following steps with $k=0,1,\ldots$
			
			\STATE  Apply the CG algorithm to obtain $\bar{w}^{k+1}$ such that
			\begin{equation*}
				\bar{w}^{k+1}\approx \underset{\bar{w}}{\rm arg \min} \ \{\psi(\bar{w},\bar{z}^k,\bar{u}^k)\}.
			\end{equation*}
			Extend $\bar{w}^{k+1}$ to $W^{k+1}$ by $W^{k+1}=\mathcal{Q}^\dagger(\bar{w}^{k+1})$.
			
			\STATE  
			Compute 
			\begin{equation*}
				Z^{k+1}=\Pi_{\Omega_\gamma}(W^{k+1}+U^k/\sigma_k), \ 
				\bar{z}^{k+1}=\mathcal{Q}(Z^{k+1}).
			\end{equation*}

			\STATE 
			Compute 
			\begin{equation*}
				U^{k+1} = U^k+\kappa\sigma(W^{k+1}-Z^{k+1}), \ 
				\bar{u}^{k+1}=\mathcal{Q}(U^{k+1}).
			\end{equation*}
			
			\STATE 
			Update $\hat{W}=W^{k+1}$, $\hat{Z}=Z^{k+1}$ and $\hat{U}=U^{k+1}$.
			
			\STATE
			$k:=k+1$.
			\ENDWHILE
		\end{algorithmic}
	\end{algorithm} 
	In Algorithm \ref{algo:AS-ADMM}, the optimal solution $\bar{w}^{k+1}$ to the subproblem $\underset{\bar{w}}{\rm arg \min} \ \{\psi(\bar{w},\bar{z}^k,\bar{u}^k)\}$ can be obtained by solving 
	\begin{equation*}
		(X_{[m]\mathcal{J}(\gamma)}^\top X_{[m]\mathcal{J}(\gamma)}+\sigma I_t)\bar{w}=X_{[m]\mathcal{J}(\gamma)}^\top y+\sigma(\bar{z}-\bar{u}/\sigma).
	\end{equation*}

	\subsection{Stopping criteria}
	
	According to the KKT conditions (\ref{equa:kkt_P'}) for Problem (\ref{prob:problemp'}), we define the following relative residuals:
	\begin{equation*}
		{\rm Res}_1:=\frac{\|W-Z\|_F}{1+\|W\|_F+\|Z\|_F}, \ \ 
		{\rm Res}_2:=\frac{\|Z-\Pi_{\Omega_{\gamma}}(Z+U)\|_F}{1+\|Z\|_F+\|U\|_F},
	\end{equation*}
	\begin{equation*}
		{\rm Res}_3:=\frac{\|X_{[m]\mathcal{J}(\gamma)}^\top(X_{[m]\mathcal{J}(\gamma)}\bar{w}-y)+\bar{u})\|}{1+\|\bar{u}\|+\|X_{[m]\mathcal{J}(\gamma)}^\top(X_{[m]\mathcal{J}(\gamma)}\bar{w}-y)\|}.
	\end{equation*}
	In our experiments, we will start the AS-{\sc Ssnpal} and AS-ADMM algorithm with the initial point $(W,Z,U)=(0,0,0)$ and terminate them if $R_{{\rm kkt}}:={\rm max}({\rm Res}_1,{\rm Res}_2,{\rm Res}_3)\leq {\rm tol}$ holds, where ``tol" refers to a given accuracy tolerance.
	
	Likewise, we start the {\sc Ssnpal} and ADMM algorithm from the point $(W,Z,U)=(0,0,0)$ and terminate them if $R_{{\rm kkt}}:={\rm max}({\rm Res}_1,{\rm Res}_2,\widetilde{{\rm Res}}_3)\leq {\rm tol}$, in which $\widetilde{{\rm Res}}_3$ is as follows
	\begin{equation*}
		\widetilde{{\rm Res}}_3:=\frac{\|\mathcal{X}^*(\mathcal{X}(W)-y)+U\|_F}{1+\|U\|_F+\|\mathcal{X}^*(\mathcal{X}(W)-y)\|_F}.
	\end{equation*}
	
	Additionally, all tested algorithms are terminated when they achieve the preset maximum number of iterations ($200$ for AS-{\sc Ssnpal} and {\sc Ssnpal}, $30000$ for AS-ADMM and ADMM) or when the running time exceeds $2$ hours.
	
	\subsection{Parameter settings}
	In algorithm \ref{algor:AS}, we adhere to the guidelines outlined in \cite{Lin2020Adaptive} for establishing the initial active index set $I^*(\gamma_0)$. We first denote 
	\begin{align*}
		\eta_i=
		\left\{ 
		\begin{array}{ll}
			i/d, & {\rm rem}(i,d)=0,  \\ [5pt]
			i/d+1, & {\rm otherwise}, 
		\end{array} \right.
		\ i = 1,\ldots,nd
	\end{align*} 
	and 
	\begin{equation*}
		\mathcal{D}=\{i\in\{1,2,\ldots,nd\}\mid s_i {\rm \ is \ among \ the \ first} \ \lceil\sqrt{n}\rceil \ {\rm largest \ values \ in} \  \{s_1,s_2,\ldots,s_nd\} \},
	\end{equation*}
	where $\lceil\sqrt{n}\rceil$ signifies the value of rounding up $\sqrt{n}$, and $s_i=\frac{\lvert \langle X_{[m]i},y\rangle\rvert}{\|X_{[m]i}\|\|y\|}$ for $i=1,2,\ldots,nd$. Then the index set $I^*(\gamma_0)$ is obtained by 
	\begin{equation*}
		I^*(\gamma_0)=\{(i,j)\mid i=\alpha_k,\ j=\eta_k, k\in \mathcal{D} \},
	\end{equation*}
	where $\alpha_k$ is given in (\ref{alpha}).
	
	The parameters of Algorithm \ref{algo:palm}, Algorithm \ref{Ssn}, and {\sc Ssnpal} in Algorithm \ref{algo:Ssnpal}, are the same as \cite{Lin2024An}, except for the penalty parameter $\sigma$. Specifically, we choose $\sigma_0=500$ for the {\sc Ssnpal} algorithm. For the AS-{\sc Ssnpal} algorithm, we set $\sigma_0=500$, and update $\sigma_{k+1}$ as follows:
	\begin{equation}\label{sigma}
		\sigma_{k+1}=\left\{ 
		\begin{array}{lll}
			{\rm min}\{1.5\sigma_k,10^7\}, &  {\rm Res}_3^k<{\rm Res}_1^k, \\ [5pt]
			{\rm max}\{10^{-5},\varsigma\sigma_k\}, & {\rm Res}_3^k\ge{\rm Res}_1^k \ {\rm and} \  {\rm Res}_3^k>0.9{\rm Res}_3^{k-1},\ k\ge 1,   \\ [5pt]
			{\rm min}\{1.05\sigma_k,10^6\}, & {\rm otherwise},
		\end{array} \right.
	\end{equation}
	where $\sigma_k$, ${\rm Res}_1^k$, ${\rm Res}_3^k$ and $R_{{\rm kkt}}^k$ are the value of $\sigma$, ${\rm Res}_1$, ${\rm Res}_3$ and $R_{{\rm kkt}}$ respectively at the $k$-th iteration, and 
	\begin{equation*}
		\varsigma=\left\{ 
		\begin{array}{lll}
			0.5, &  {\rm Res}_1^k<0.9{\rm Res}_1^{k-1} \ {\rm and} \ R_{\rm kkt}^k<5{\rm tol}, \\ [5pt]
			0.8, & 0.9{\rm Res}_1^{k-1}\le{\rm Res}_1^k<1.1{\rm Res}_1^{k-1} \ {\rm and} \ R_{\rm kkt}^k<5{\rm tol},  \\ [5pt] 
			0.9, & {\rm otherwise}. 
		\end{array} \right.
	\end{equation*}
	
	For Algorithm \ref{algo:ADMM} and Algorithm \ref{algo:AS-ADMM}, we choose the step length $\kappa=1.618$ and the penalty parameter $\sigma=100$. 
	
	\subsection{Numerical results for synthetic data}
	In this subsection, we conduct a performance comparison among all algorithms (AS-{\sc Ssnpal}, AS-ADMM, {\sc Ssnpal}, and ADMM) applied to solve multi-task Lasso problems on synthetic data sets. The experiment setup is as follows: set $(m,nd) = (2560i,720i), \ i=1,2,\ldots,10$. For each $(m,nd)$, there are $n = 20i$ tasks, $d = 36$ features and $m_j = 128, \ j=1,2,\ldots,n$ samples. The elements of the data matrix $X^j\in\mathbb{R}^{m_j\times d}$ for the $j$-th task are drawn randomly from a Gaussian distribution $\mathcal{N}(0,37)$. For the sparse matrix $W\in \mathbb{R}^{d\times n}$, its entries are generated randomly from the normal distribution $\mathcal{N}(0,1)$, with 60\% of its components randomly set to zero. The regression coefficient vector $\hat{y}^j\in\mathbb{R}^{m_j}$ is then obtained from $\hat{y}^j=X^j\hat{w}^j+\delta^j$, where $\delta^j \sim\mathcal{N}(0,I_{m_j})$. We choose $\gamma = 0.01, 0.03, 0.05$ and randomly generate $10$ different instances for each $(m,nd)$. The average numerical results of AS-{\sc Ssnpal}, AS-ADMM, {\sc Ssnpal}, and ADMM under ${\rm tol}=10^{-3}$ and ${\rm tol}=10^{-6}$ are listed in Table \ref{TableRand:p_e3} and \ref{TableRand:p_e6}, respectively. To be specific, we report the cumulative computation time in seconds (Time) and the relative KKT residuals ($R_{\rm kkt}$).

	\begin{center}
		\setlength{\tabcolsep}{8.8pt}{
			\begin{longtable}{|c|c|cccc|cccc|}
				\captionsetup{width=0.85\textwidth}
				\caption{The performance of AS-{\sc Ssnpal}, AS-ADMM, {\sc Ssnpal} and ADMM on synthetic data. In the table, we terminate algorithms when $R_{{\rm kkt}}\le 10^{-3}$. ``a1"=AS-{\sc Ssnpal}, ``a2"=AS-ADMM, ``a3"={\sc Ssnpal}, ``a4"=ADMM. Times are shown in seconds. } \label{TableRand:p_e3}\\
				
				\hline 
				\multirow{2}*{i} &\multirow{2}*{$\gamma$}&
				\multicolumn{4}{c|}{Time} &
				\multicolumn{4}{c|}{$R_{kkt}$} \\ \cline{3-10}
				& &  \multicolumn{1}{c}{a1}& \multicolumn{1}{c}{a2}& \multicolumn{1}{c}{a3}& \multicolumn{1}{c|}{a4} &\multicolumn{1}{c}{a1}& \multicolumn{1}{c}{a2}& \multicolumn{1}{c}{a3}& \multicolumn{1}{c|}{a4}  \\ \hline
				\endfirsthead
				
				\multicolumn{10}{c}{{\bfseries \tablename\ \thetable{} -- continued from previous page}} \\
				
				\hline
				\multirow{2}*{i} &\multirow{2}*{$\gamma$}&
				\multicolumn{4}{c|}{Time} &
				\multicolumn{4}{c|}{$R_{kkt}$} \\ \cline{3-10}
				& &  \multicolumn{1}{c}{a1}& \multicolumn{1}{c}{a2}& \multicolumn{1}{c}{a3}& \multicolumn{1}{c|}{a4} &\multicolumn{1}{c}{a1}& \multicolumn{1}{c}{a2}& \multicolumn{1}{c}{a3}& \multicolumn{1}{c|}{a4}  \\ \hline
				\endhead
				
				\multicolumn{10}{|r|}{\textbf{Continued on next page}} \\
				\hline
				\endfoot
				
				\hline
				\endlastfoot
				\input{TableRandPe3.dat}
		\end{longtable}}
	\end{center}
	
	As shown in Table  \ref{TableRand:p_e3}, all tested algorithms successfully solve all instances for $\gamma = 0.01, 0.03, 0.05$ with the desired accuracy. A careful comparison reveals that AS-{\sc Ssnpal} performs competitively with {\sc Ssnpal} in some specific cases, such as the small-scale instance $i=1$. However, for moderate to large-scale instances, AS-{\sc Ssnpal} is approximately $1.5$ times faster than {\sc Ssnpal}. Additionally, the cumulative time required by AS-{\sc Ssnpal} to generate solution paths for ${0.01, 0.03, 0.05}$ is significantly less than that of AS-ADMM and ADMM. For instance, in case $7$, AS-{\sc Ssnpal} generates the solution paths in just $0.5$ seconds, while AS-ADMM and ADMM require about $27$ and $64$ seconds, respectively. In fact, AS-{\sc Ssnpal} consistently solves each solution path in under $1$ second.
	
	\begin{center}
		\setlength{\tabcolsep}{8.0pt}{
			\begin{longtable}{|c|c|cccc|cccc|}
				\captionsetup{width=0.9\textwidth}
				\caption{The performance of AS-{\sc Ssnpal}, AS-ADMM, {\sc Ssnpal} and ADMM on synthetic data. In the table, we terminate algorithms when $R_{{\rm kkt}}\le 10^{-6}$.  ``a1"=AS-{\sc Ssnpal}, ``a2"=AS-ADMM, ``a3"={\sc Ssnpal}, ``a4"=ADMM. Times are shown in seconds.} \label{TableRand:p_e6}\\
				
				\hline 
				\multirow{2}*{i} &\multirow{2}*{$\gamma$}&
				\multicolumn{4}{c|}{Time} &
				\multicolumn{4}{c|}{$R_{kkt}$} \\ \cline{3-10}
				& &  \multicolumn{1}{c}{a1}& \multicolumn{1}{c}{a2}& \multicolumn{1}{c}{a3}& \multicolumn{1}{c|}{a4} &\multicolumn{1}{c}{a1}& \multicolumn{1}{c}{a2}& \multicolumn{1}{c}{a3}& \multicolumn{1}{c|}{a4}  \\ \hline
				\endfirsthead
				
				\multicolumn{10}{c}{{\bfseries \tablename\ \thetable{} -- continued from previous page}} \\
				
				\hline
				\multirow{2}*{i} &\multirow{2}*{$\gamma$}&
				\multicolumn{4}{c|}{Time} &
				\multicolumn{4}{c|}{$R_{kkt}$} \\ \cline{3-10}
				& &  \multicolumn{1}{c}{a1}& \multicolumn{1}{c}{a2}& \multicolumn{1}{c}{a3}& \multicolumn{1}{c|}{a4} &\multicolumn{1}{c}{a1}& \multicolumn{1}{c}{a2}& \multicolumn{1}{c}{a3}& \multicolumn{1}{c|}{a4}  \\ \hline
				\endhead
				
				\multicolumn{10}{|r|}{\textbf{Continued on next page}} \\
				\hline
				\endfoot
				
				\hline
				\endlastfoot
				
				\input{TableRandPe6.dat}
				
		\end{longtable}}
	\end{center}
	
	Table \ref{TableRand:p_e6} presents the numerical results for the AS-{\sc Ssnpal}, AS-ADMM, {\sc Ssnpal}, and ADMM algorithms with a tolerance of ${\rm tol}=10^{-6}$. The values highlighted in bold indicate that the corresponding algorithm failed to reach the desired accuracy within the preset time or maximum number of iterations. From the results in Table \ref{TableRand:p_e6}, we observe that all tested algorithms, except for ADMM, successfully solve all instances, highlighting the lower robustness of ADMM compared to AS-{\sc Ssnpal}, AS-ADMM, and {\sc Ssnpal}. When comparing the total computation time across all algorithms, our proposed AS-{\sc Ssnpal} algorithm proves to be significantly more efficient. For instance, in instance $6$, AS-{\sc Ssnpal} is approximately $63$ times faster than AS-ADMM, 5 times faster than {\sc Ssnpal}, and $3815$ times faster than ADMM. Moreover, AS-{\sc Ssnpal} consistently achieves more accurate approximate solutions than AS-ADMM, {\sc Ssnpal}, and ADMM for each tested instance.

	\subsection{Numerical results for real data} 
	In this subsection, we use real datasets to demonstrate the superior performance of our proposed algorithm in generating solution paths. We also compare its performance with that of other algorithms, including {\sc Ssnpal}, AS-ADMM, and ADMM. The data matrix $\widetilde{X}=[X^1;X^2;\ldots;X^n]\in\mathbb{R}^{m\times d}$ and the response vector $y\in\mathbb{R}^m$ are derived from two different real datasets: the LIBSVM data repository\footnote{\url{https://www.csie.ntu.edu.tw/~cjlin/libsvmtools/datasets/}} and Andreas Argyriou's homepage\footnote{\url{https://home.ttic.edu/~argyriou/code/index.html}}.

	\subsubsection{School data}
	The School dataset, originally provided by the Inner London Education Authority\footnote{\url{http://www.mlwin.com/intro/datasets.html}}, is widely used in multi-task regression studies (cf. \cite{Argyriou2008Convex,Chen2011Integrating,Chu2020Semismooth}). It contains examination scores for $15,362$ students from $139$ secondary schools. Each student's performance is characterized by $28$ attributes, such as gender and ethnic group. Consequently, the dataset consists of $n=139$ tasks, each with a varying number of observations, totaling $m=15,362$. Each observation includes $d=28$ features. In our experiments, we set the number of observations for each task to $m_j=110$ for $j=1,2,\ldots,138$, and $m_{139}=182$. Additionally, we standardized the data so that each column of $(\widetilde{X},y)$ has a mean of $0$ and a variance of $1$.
	
	\begin{center}
		\setlength{\tabcolsep}{6.5pt}{
			\begin{longtable}{|c|cccc|cccc|}
				\captionsetup{width=0.9\textwidth}
				\caption{The performance of AS-{\sc Ssnpal}, AS-ADMM, {\sc Ssnpal} and ADMM on the School data. In the table,  ``a1" $=$ AS-{\sc Ssnpal}, ``a2" $=$ AS-ADMM, ``a3" $=$ {\sc Ssnpal} and ``a4" $=$ ADMM. We terminate algorithms when $R_{{\rm kkt}}\le 10^{-3}$ and $R_{{\rm kkt}}\le 10^{-6}$, respectively.} \label{TableSchool:p_e36}\\
				
				\hline
				\multirow{2}*{$\gamma$} &
				\multicolumn{4}{c|}{Time} &
				\multicolumn{4}{c|}{$R_ {kkt}$}\\ \cline{2-9}
				&  \multicolumn{1}{c}{a1}& \multicolumn{1}{c}{a2}& \multicolumn{1}{c}{a3}& \multicolumn{1}{c|}{a4}
				&\multicolumn{1}{c}{a1}& \multicolumn{1}{c}{a2}& \multicolumn{1}{c}{a3}& \multicolumn{1}{c|}{a4} \\ 
				\hline
				\multicolumn{9}{|l|}{tol=1e-03} \\  \hline
				\endfirsthead
				
				\multicolumn{9}{c|}{{\bfseries \tablename\ \thetable{} -- continued from previous page}} \\
				
				\hline
				\multirow{2}*{$\gamma$} &
				\multicolumn{4}{c|}{Time} &
				\multicolumn{4}{c|}{$R_ {kkt}$}\\ \cline{2-9}
				&  \multicolumn{1}{c}{a1}& \multicolumn{1}{c}{a2}& \multicolumn{1}{c}{a3}& \multicolumn{1}{c|}{a4}
				&\multicolumn{1}{c}{a1}& \multicolumn{1}{c}{a2}& \multicolumn{1}{c}{a3}& \multicolumn{1}{c|}{a4}
				\\ 
				\endhead
				
				\multicolumn{9}{|r|}{\textbf{Continued on next page}} \\
				\hline
				\endfoot
				
				\hline
				\endlastfoot
				\input{TableSchoolP_e36.dat}
		\end{longtable}}
	\end{center}
	
	Table \ref{TableSchool:p_e36} presents the detailed performance of AS-{\sc Ssnpal}, {\sc Ssnpal}, AS-ADMM, and ADMM in solving the multi-task Lasso problem on the School dataset, considering three values of $\gamma$: $0.01$, $0.03$, and $0.05$. For each algorithm, the table reports computation time (Time) and relative KKT residuals ($R_{\rm kkt}$) for each instance. As shown, the computation time of AS-ADMM is occasionally comparable to that of our proposed algorithm, likely due to the small size of the School dataset. However, both AS-{\sc Ssnpal} and AS-ADMM significantly outperform {\sc Ssnpal} and ADMM in terms of running time. Specifically, for $\gamma=0.03$, AS-{\sc Ssnpal} and AS-ADMM take only about $0.06$ seconds to achieve a relative KKT residual of $R_{\rm kkt} \leq 10^{-6}$, whereas {\sc Ssnpal} requires over $2.5$ seconds, and ADMM takes as long as $60.55$ seconds. These results demonstrate that the adaptive sieving strategy substantially enhances the efficiency of the {\sc Ssnpal} and ADMM algorithms for solving the multi-task Lasso problem on the School dataset.

	\subsubsection{LIBSVM data}
	Next, we perform the numerical tests for the AS-{\sc Ssnpal}, {\sc Ssnpal}, AS-ADMM and ADMM algorithms on real data $(\widetilde{X},y)$ obtained from LIBSVM data repository. Table \ref{Tablesummary} provides the basic information for the collected instances. In the table, ``proname" refers to the problem name, while $m$, $d$, and $n$ represent the sample size, feature dimensionality, and number of tasks, respectively. For datasets where the number of tasks is unspecified, we have chosen $n=20$. Additionally, the vector $y$ and each row of $\widetilde{X}$ are normalized to have a unit $\ell_2$-norm.

	\begin{center}{\footnotesize}
		\setlength{\tabcolsep}{8pt} 
		\renewcommand\arraystretch{1.1}
		\begin{longtable}{|c|c|c|c|} 
			\caption{\footnotesize {Statistics of the LIBSVM test instances.}}\label{Tablesummary}  \\	
			\hline
			proname  &$m$ &$d$ &$n$  \\
			\hline
			\endfirsthead
			
			\multicolumn{4}{c}{{\bfseries \tablename\ \thetable{} -- continued from previous page}} \\
			
			\hline
			proname  &$m$ &$d$ &$n$  \\
			\hline
			\endhead
			
			\multicolumn{4}{|r|}{\textbf{Continued on next page}} \\
			\hline
			\endfoot
			
			\hline
			\endlastfoot			
			\input{TableSummary.dat}
		\end{longtable}
	\end{center}

	\begin{center}
		\setlength{\tabcolsep}{8.0pt}{
			\begin{longtable}{|c|c|cccc|cccc|}
				\captionsetup{width=0.9\textwidth}
				\caption{The performance of AS-{\sc Ssnpal}, AS-ADMM, {\sc Ssnpal} and ADMM on the LIBSVM data. In the table, we terminate algorithms when $R_{{\rm kkt}}\le 10^{-3}$.  ``a1"=AS-{\sc Ssnpal}, ``a2"=AS-ADMM, ``a3"={\sc Ssnpal}, ``a4"=ADMM. Times are shown in seconds.} \label{TableReal:p_e3}\\
				
				\hline 
				\multirow{2}*{i} &\multirow{2}*{$\gamma$}&
				\multicolumn{4}{c|}{Time} &
				\multicolumn{4}{c|}{$R_{kkt}$} \\ \cline{3-10}
				& &  \multicolumn{1}{c}{a1}& \multicolumn{1}{c}{a2}& \multicolumn{1}{c}{a3}& \multicolumn{1}{c|}{a4} &\multicolumn{1}{c}{a1}& \multicolumn{1}{c}{a2}& \multicolumn{1}{c}{a3}& \multicolumn{1}{c|}{a4}  \\ \hline
				\endfirsthead
				
				\multicolumn{10}{c}{{\bfseries \tablename\ \thetable{} -- continued from previous page}} \\
				
				\hline
				\multirow{2}*{i} &\multirow{2}*{$\gamma$}&
				\multicolumn{4}{c|}{Time} &
				\multicolumn{4}{c|}{$R_{kkt}$} \\ \cline{3-10}
				& &  \multicolumn{1}{c}{a1}& \multicolumn{1}{c}{a2}& \multicolumn{1}{c}{a3}& \multicolumn{1}{c|}{a4} &\multicolumn{1}{c}{a1}& \multicolumn{1}{c}{a2}& \multicolumn{1}{c}{a3}& \multicolumn{1}{c|}{a4}  \\ \hline
				\endhead
				
				\multicolumn{10}{|r|}{\textbf{Continued on next page}} \\
				\hline
				\endfoot
				
				\hline
				\endlastfoot
				
				\input{TableRealPe3.dat}
				
		\end{longtable}}
	\end{center}

	\begin{center}
		\setlength{\tabcolsep}{8.0pt}{
			\begin{longtable}{|c|c|cccc|cccc|}
				\captionsetup{width=0.9\textwidth}
				\caption{The performance of AS-{\sc Ssnpal}, AS-ADMM, {\sc Ssnpal} and ADMM on the LIBSVM data. In the table, we terminate algorithms when $R_{{\rm kkt}}\le 10^{-6}$.  ``a1"=AS-{\sc Ssnpal}, ``a2"=AS-ADMM, ``a3"={\sc Ssnpal}, ``a4"=ADMM. Times are shown in seconds.} \label{TableReal:p_e6}\\
				
				\hline 
				\multirow{2}*{i} &\multirow{2}*{$\gamma$}&
				\multicolumn{4}{c|}{Time} &
				\multicolumn{4}{c|}{$R_{kkt}$} \\ \cline{3-10}
				& &  \multicolumn{1}{c}{a1}& \multicolumn{1}{c}{a2}& \multicolumn{1}{c}{a3}& \multicolumn{1}{c|}{a4} &\multicolumn{1}{c}{a1}& \multicolumn{1}{c}{a2}& \multicolumn{1}{c}{a3}& \multicolumn{1}{c|}{a4}  \\ \hline
				\endfirsthead
				
				\multicolumn{10}{c}{{\bfseries \tablename\ \thetable{} -- continued from previous page}} \\
				
				\hline
				\multirow{2}*{i} &\multirow{2}*{$\gamma$}&
				\multicolumn{4}{c|}{Time} &
				\multicolumn{4}{c|}{$R_{kkt}$} \\ \cline{3-10}
				& &  \multicolumn{1}{c}{a1}& \multicolumn{1}{c}{a2}& \multicolumn{1}{c}{a3}& \multicolumn{1}{c|}{a4} &\multicolumn{1}{c}{a1}& \multicolumn{1}{c}{a2}& \multicolumn{1}{c}{a3}& \multicolumn{1}{c|}{a4}  \\ \hline
				\endhead
				
				\multicolumn{10}{|r|}{\textbf{Continued on next page}} \\
				\hline
				\endfoot
				
				\hline
				\endlastfoot
				
				\input{TableRealPe6.dat}
				
		\end{longtable}}
	\end{center}
	
	Table \ref{TableReal:p_e3} reports the numerical results of the AS-{\sc Ssnpal}, AS-ADMM, {\sc Ssnpal} and ADMM algorithms when $R_{{\rm kkt}}\le 10^{-3}$. Similarly, ``Time" refers to the cumulative computation time of the tested algorithm for generating solution paths of Problem (\ref{mtlproblem1}). Table  \ref{TableReal:p_e3} demonstrates that the AS-{\sc Ssnpal} algorithm outperforms the other algorithms tested. For instance, when addressing the $i=6$ instance with moderate dimensions $(m,d,n) = (28844,300,20)$, AS-{\sc Ssnpal} is approximately twice as fast as AS-ADMM, $47$ times faster than {\sc Ssnpal}, and $392$ times faster than ADMM. Further comparisons of AS-{\sc Ssnpal} with AS-ADMM, {\sc Ssnpal}, and ADMM for solving multi-task Lasso problems, under the condition that $R_{{\rm kkt}}\le 10^{-6}$, are presented in Table \ref{TableReal:p_e6}. In terms of robustness, AS-{\sc Ssnpal}, AS-ADMM, and {\sc Ssnpal} consistently solve all instances, whereas ADMM fails on some instances. Moreover, AS-{\sc Ssnpal} is notably more efficient than the other algorithms, especially ADMM. Specifically, AS-{\sc Ssnpal} is at least eight times faster than AS-ADMM, $13$ times faster than {\sc Ssnpal}, and $5730$ times faster than ADMM on successful instances. These findings further confirm that AS-{\sc Ssnpal} is both more effective and more robust for solving the multi-task Lasso problem.

	\section{Conclusion} \label{sec:conclude} 
	In this paper, we propose an adaptive sieving (AS) strategy integrated with the semismooth Newton proximal augmented Lagrangian ({\sc Ssnpal}) algorithm to generate solution path for multi-task Lasso problems. Specifically, the {\sc Ssnpal} algorithm is used to solve the reduced problems within the AS strategy, exhibiting both global convergence and an asymptotically superlinear convergence rate. For solving the inner problems in the {\sc Ssnpal} algorithm, we introduce a semismooth Newton method, which is proven to achieve superlinear or even quadratic convergence rates. Additionally, we show that the AS strategy terminates within a finite number of iterations. Finally, we evaluate the numerical performance of the AS-{\sc Ssnpal} algorithm against other algorithms (AS-ADMM, {\sc Ssnpal}, and ADMM) using both synthetic and real datasets. The results highlight the superior efficiency and robustness of our proposed algorithm, particularly on real datasets.

	\acks{The research of Lanyu Lin was supported by the Educational Research Project for Young and Middle-aged Teachers of Fujian Province (Grant No. JAT241003). The research of Yong-Jin Liu was supported by grants from the National Natural Science Foundation of China (Grant No. 12271097), the Key Program of National Science Foundation of Fujian Province of China (Grant No. 2023J02007), the Central Guidance on Local Science and Technology Development Fund of Fujian Province (Grant No. 2023L3003), and the Fujian Alliance of Mathematics (Grant No. 2023SXLMMS01). The research of Junfeng Yang was supported in part by the National Natural Science Foundation of China (Grant No. 12371301) and the Natural Science Foundation for Distinguished Young Scholars of Gansu Province (Grant No. 22JR5RA223).}
	
	\vskip 0.2in	
		
\end{document}